\newtheorem{thm}{Theorem}[section]
 \newtheorem{cor}[thm]{Corollary}
 \newtheorem{exa}[thm]{Example}
 \newtheorem{rem}[thm]{Remark}
  \newtheorem{defn}[thm]{Definition}
\begin{document}

 \centerline{\large{\textbf{FIBRATIONS PROPERTY OF EMBEDDING  MAPS     }}}
 \centerline{\large{\textbf{ OF   ORBIFOLD CHARTS      }}}

\vskip 0.5 cm

  \centerline{Hakeem A. Othman$^1$ and Santanu Acharjee$^2$}
\centerline{$^
1$Department of Mathematics, AL-Qunfudhah University college, Umm Al-Qura University,
KSA.}
\centerline{$^1$Department of Mathematics, Rada'a College of Education and Science, Albaydha University, Albaydha, Yemen.}
 \centerline{$^2$Department of Mathematics, Gauhati University, Guwahati-781014, Assam, India.}

 \centerline{e-mails: $^1$haoali@uqu.edu.sa, hakim$_{-}$albdoie@yahoo.com}
 
  \centerline{$^2$sacharjee326@gmail.com}



\begin{abstract}
\footnotesize{  \noindent In this paper we introduce the notion of
Hurewicz fibrations in the class of embedding maps of  orbifold
charts by giving the concept of E-fibration embedding. We study
the fundamental properties of this concept such as the
restriction, product and its relationship with Hurewicz
fibration, etc. Furthermore, we introduce the notion of lifting
functions of  E-fibration embedding and  study  preserving projection property of these lifting functions. \\
\quad\\
 \emph{Keywords}:  Orbifold; embedding; fibration; homotopy.\\
\quad\\
 \emph{ 2020 AMS classifications}: 55P05, 57R18, 57N35, 14E25, 32C05.}

\end{abstract}


\section{Introduction}
The concept of orbifold has been described using various
mathematical constructions and contexts. It was first introduced
by Satake \cite{Satake1, Satake2}. He called them
V-manifolds. He defined orbifolds as topological spaces with an
atlas of charts. He viewed orbifolds as a generalization of
manifolds. Orbifolds and manifolds   are described by charts. An
orbifold chart under any space $X$ is defined as a triple $(X_{U},
G_{U}, \Gamma_{U})$, where $X_{U}$ is open set in $R^{n}$, $G_{U}$
is a finite group of homeomorphisms of $X_{U}$ and $\Gamma_{U}$ is
a map. One of the issues with the atlas definition is that there
is no canonical notion of map between orbifolds. Satake introduced
maps of orbifolds \cite{Satake1, Satake2} and called them embedding maps which are considered as
generalizations of smooth maps of manifolds.  One may refer to \cite{BrodskyScepin, Fantechi,  VafaWitten, AminHakeem} for more about orbifold.\\

\noindent In this paper, Section 2 introduces the concept of
E-fibration embedding and  studies  the fundamental properties of
E-fibration embedding such as the restriction property, product
property, relationship between E-fibration embedding maps and
Hurewicz fibrations. In Section 3, we introduce the notion of
lifting functions of E-fibration embedding by giving the concept
of E-lifting function and regular E-lifting function. In Section
4, we show preserving projection property for E-lifting
functions. Throughout this paper all spaces   will be assumed to be
Hausdorff spaces. For any space $X$,  $X^{I}$ denotes the set of
all continuous functions since space (paths) from  $I=[0,1]$ into $X$. We take
$X^{I}$ with the  compact-open topology. For all $x\in X$, by
$\widetilde{x}$ we mean the constant path at a point $x$. For two
paths $\alpha, \beta\in X^{I}$ with $\alpha(1)=\beta(0)$, by
$\alpha\star\beta$ we mean the path in $X$ defined by
\[(\alpha\star\beta)(t)=\left\{%
\begin{array}{ll}
    \alpha(2t), & 0\leq t \leq \frac{1}{2}; \\
    \beta(2t-1), & \frac{1}{2}\leq t \leq 1. \\
\end{array}%
\right.    \]
\begin{defn}\label{Pr1}
   \emph{\cite{ Hatcher, Spanier}  For two continuous functions $f,g: X\to Y $, the function $f$ is called a \emph{
 homotopic}  to $g$ and
write $f\simeq g$ if there exists a  continuous function
 $H: X\times I\to Y$ satisfies  $H(x,0)=f(x)$ and $H (x,1)=g(x)$. }
\end{defn}

\begin{defn}\label{Pr1+1}
   \emph{\cite{Hurewicz1}   A map $f:X\to Y$ is called a
\emph{Hurewicz fibrations}  if for every space $Z$ and two  maps
$h:Z\to X$ and  $H:Z\times I\to Y$ with $H_{0}=f\circ h$, there
exists a map $F:Z\times I\to X$ such that $F_{0}=h$ and $f\circ
F=H$. }
\end{defn}


\noindent By a \emph{topological group} $G$ we mean     a group
$G$ together with a topology on $G$ such that the functions
$(g,g')\to gg'$ and  $g\to g^{-1}$ are continuous of a product
space $G\times G$ into a space $G$ and   of a space  $G$ into $G$,
respectively.  The \emph{action} of   $G$ on any space $X$ is
defined as a map $G\times X\to X$ denoted by $(g,x)\to gx$ such
that $g(g'x)=(gg)x$ and $1x=x$ for all $g,g'\in G$ and
$x\in X$.\\

\noindent  For action $G\times X\to X$ of a topological group $G$
on a  space $X$ and for $x\in X$, we mean by the \emph{orbit set}
of $x$ is the set $G(x)=\{gx\in X: g\in G\}$ and the \emph{orbit
space} $X/G$ is the set of all orbits $G(x)$ in $X$ endowed with
the quotient topology with respect to the \emph{natural orbit map}
$X\to X/G$.

\begin{defn}\label{Pr2}
\emph{\cite{AdemLeidaRuan} An \emph{orbifold chart} on topological
space $X$ is a triple $(X_{U}, G_{U}, \Gamma_{U})$, where $X_{U}$
is an open set in a  space $R^{n}$, $G_{U}$ is a finite group of
homeomorphisms of $X_{U}$ and $\Gamma_{U}:X_{U}\to X$ is a map
defined by $\Gamma_{U}=\overline{\Gamma_{U}}\circ p$, where
$p:X_{U}\to X_{U}/G_{U}$ is the orbit map and
$\overline{\Gamma_{U}}:X_{U}/G_{U}\to X$ is a map that induces a
homeomorphism of $X_{U}$ onto an open subset $U$ of $X$. }
\end{defn}
\begin{defn}\label{Or1}
\emph{\cite{AdemLeidaRuan} For a  topological space $X$, an
\emph{embedding}  $f: X_{U} \to X_{U'}$    is a smooth injective
function  from orbifold chart $(X_{U}, G_{U}, \Gamma_{U})$ into
orbifold chart $(X_{U'}, G_{U'}, \Gamma_{U'})$ and it yields a
homeomorphism between $X_{U}$ and $f(X_{U})$ such that
$\Gamma_{U'}\circ f=\Gamma_{U}$. }
\end{defn}

\section{E-fibration embedding maps}

In this section we introduce \emph{E-fibration} embedding map and study related properties of it.
\begin{defn}\label{E1}
\emph{An  embedding   $f: X_{U} \to  X_{U'}$   of  orbifold chart
$(X_{U}, G_{U}, \Gamma_{U})$ into orbifold chart $(X_{U'}, G_{U'},
\Gamma_{U'})$  is called an \emph{E-fibration} if for every space
$Z$ and map  $h:Z\to X_{U}$ and $H:Z\to X^{I}$ with
$H_{0}=\Gamma_{U}\circ h$, there exists a  map $F:Z\to
(X_{U'})^{I}$ such that $F_{0}=f\circ h$ and $\Gamma_{U'}\circ
F_{t}= H_{t}$ for all $t\in I$.}
\end{defn}

\noindent By an \emph{E-triple } $(\Theta_{2}|X_{2},
\Theta_{1}|X_{1}, X_{\Theta})$ we mean three spaces $X_{1}$,
$X_{2}$ and $X$ with three maps $\Theta_{1}:X_{1}\to X$,
$\Theta_{2}:X_{2}\to X$ and $\Theta :X_{2}\to X_{1}$ such that
$\Theta_{2}\circ \Theta=\Theta_{1}$. We say that an  E-triple
$(\Theta_{2}|X_{2}, \Theta_{1}|X_{1}, X_{\Theta})$ has an
\emph{E-fibration property} if for every space $Z$ and maps
$h:Z\to X_{2}$ and  $H:Z\to X^{I}$ with $H_{0}=\Theta_{2}\circ h$,
there exists a  map $F:Z\to (X_{1})^{I}$ such that
$F_{0}=\Theta\circ h$ and $\Theta_{1}\circ
F_{t}= H_{t}$ for all $t\in I$. \\
\begin{thm}\label{E2}
\emph{An  embedding   $f: X_{U} \to  X_{U'}$   of  orbifold chart
$(X_{U}, G_{U}, \Gamma_{U})$ into orbifold chart $(X_{U'}, G_{U'},
\Gamma_{U'})$  is   an  E-fibration embedding  if and only if an
E-triple $(\Gamma_{U}|X_{U}, \Gamma_{U'}|X_{U'}, X_{f})$ has an
E-fibration property.}
\end{thm}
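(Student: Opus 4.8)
The plan is to show that, once the relevant definitions are unwound, the two statements are literally the same condition, so the equivalence holds on the nose. First I would record the ambient data. By Definition \ref{Or1}, the embedding $f:X_{U}\to X_{U'}$ satisfies $\Gamma_{U'}\circ f=\Gamma_{U}$. This is exactly the compatibility relation among the three maps required to form the E-triple $(\Gamma_{U}|X_{U}, \Gamma_{U'}|X_{U'}, X_{f})$: here the role of the ``$X_{2}$'' slot is played by $X_{U}$ with structure map $\Gamma_{U}$, the role of the ``$X_{1}$'' slot by $X_{U'}$ with structure map $\Gamma_{U'}$, the role of the base space by the ambient space $X$, and the comparison map ``$\Theta$'' is $f$ itself. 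Thus $(\Gamma_{U}|X_{U}, \Gamma_{U'}|X_{U'}, X_{f})$ is a genuine E-triple, and it is legitimate to ask whether it enjoys the E-fibration property.

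Next I would spell out the E-fibration property of this particular E-triple by substituting the above identifications into the general definition: for every space $Z$ and maps $h:Z\to X_{U}$ and $H:Z\to X^{I}$ with $H_{0}=\Gamma_{U}\circ h$, there exists a map $F:Z\to (X_{U'})^{I}$ such that $F_{0}=f\circ h$ and $\Gamma_{U'}\circ F_{t}=H_{t}$ for all $t\in I$. Comparing this term by term with Definition \ref{E1}, one sees that the hypotheses ($Z$ arbitrary, $h:Z\to X_{U}$, $H:Z\to X^{I}$, $H_{0}=\Gamma_{U}\circ h$) and the conclusion ($F:Z\to (X_{U'})^{I}$ with $F_{0}=f\circ h$ and $\Gamma_{U'}\circ F_{t}=H_{t}$) coincide verbatim with the conditions defining ``$f$ is an E-fibration.'' Hence the statement ``$f$ is an E-fibration embedding'' and the statement ``$(\Gamma_{U}|X_{U}, \Gamma_{U'}|X_{U'}, X_{f})$ has the E-fibration property'' are one and the same assertion, which establishes both implications simultaneously.

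For the reader's convenience I would nonetheless present the two directions explicitly. For the forward direction, assume $f$ is an E-fibration, take arbitrary test data $(Z,h,H)$ for the E-triple, observe via the dictionary above that it is precisely test data for $f$ in the sense of Definition \ref{E1}, and transport the resulting lift $F$; for the converse, run the same translation in reverse. Since the whole argument is a matter of matching notation, there is no real obstacle; the only point deserving a word of justification is that the triple in question is actually an E-triple, i.e. that $\Gamma_{U'}\circ f=\Gamma_{U}$, and this is immediate from the definition of an embedding of orbifold charts.
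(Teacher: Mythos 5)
Your proposal is correct and matches the paper's (implicit) reasoning: the paper states Theorem \ref{E2} without any proof precisely because, as you show, substituting $\Theta_{2}=\Gamma_{U}$, $X_{2}=X_{U}$, $\Theta_{1}=\Gamma_{U'}$, $X_{1}=X_{U'}$, $\Theta=f$ into the E-fibration property reproduces Definition \ref{E1} verbatim, so the equivalence is definitional. One small point in your favor: the paper's definition of an E-triple writes the compatibility as $\Theta_{2}\circ\Theta=\Theta_{1}$, which does not typecheck; the intended condition is $\Theta_{1}\circ\Theta=\Theta_{2}$, i.e. $\Gamma_{U'}\circ f=\Gamma_{U}$, and that is exactly the form you use.
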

\begin{exa}\label{E3}
\emph{For an  embedding   $f: X_{U} \to  X_{U'}$   of  orbifold
chart $(X_{U}, G_{U}, \Gamma_{U})$ into  $(X_{U'}, G_{U'},
\Gamma_{U'})$, the  E-triple $(\pi_{2}|X_{U}\times X,
\pi_{2}|X_{U'}\times X, X_{f\times id_{X}})$ has an  E-fibration
property, where  $ \pi_{2}$ is the usual  second projection  and
$id_{X}$ is the identity map on $X$.  Note that If $Z$ is any
space, $h:Z\to X_{U}\times X$ is any map, and $H: Z\to X^{I}$ is a
map with $H_{0}=\pi_{2}\circ h$, define  the desired a  map $F:Z
\to (X_{U'}\times X)^{I}$ by \[F_{t}=(f\circ \pi_{1}\circ h)\times H_{t}\]
for all  $t\in I$, where  $ \pi_{1}$ is the usual  first
projection.}
\end{exa}

\noindent In the following theorem, we show  the composition
property  of E-fibrations  and Hurewicz fibrations.
\begin{thm}\label{E4}
\emph{ Let $f: X_{U} \to  X_{U'}$  be  an  E-fibration embedding
and $f': X \to  X'$  be a Hurewicz fibration of $X$ into a space
$X'$.  Then, the
 E-triple $(f'\circ \Gamma_{U}|X_{U}, f'\circ \Gamma_{U'}|X_{U'}, X'_{f})$ has an  E-fibration property.}
\end{thm}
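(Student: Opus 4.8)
The plan is to chain the two given lifting properties. Given the data for the E-triple $(f'\circ\Gamma_{U}|X_{U}, f'\circ\Gamma_{U'}|X_{U'}, X'_{f})$, I would first lift the homotopy ``downstairs'' from $X'$ back to $X$ using that $f'$ is a Hurewicz fibration, and then push this lifted homotopy ``sideways'' into $(X_{U'})^{I}$ using that $f$ is an E-fibration embedding. Note first that the E-triple is well-defined, since $(f'\circ\Gamma_{U'})\circ f=f'\circ(\Gamma_{U'}\circ f)=f'\circ\Gamma_{U}$ by Definition \ref{Or1}.

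So let $Z$ be any space, $h:Z\to X_{U}$ a map, and $H:Z\to (X')^{I}$ a map with $H_{0}=(f'\circ\Gamma_{U})\circ h$. Using the exponential law $C(Z\times I,X')\cong C(Z,(X')^{I})$ — valid since $I$ is compact Hausdorff and all spaces are Hausdorff — regard $H$ as a homotopy $\widetilde{H}:Z\times I\to X'$ with $\widetilde{H}(z,t)=H_{t}(z)$. Put $\bar{h}=\Gamma_{U}\circ h:Z\to X$; then $f'\circ\bar{h}=f'\circ\Gamma_{U}\circ h=H_{0}=\widetilde{H}(\,\cdot\,,0)$.

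Since $f':X\to X'$ is a Hurewicz fibration, applying Definition \ref{Pr1+1} to the maps $\bar{h}:Z\to X$ and $\widetilde{H}:Z\times I\to X'$ yields a homotopy $\widetilde{G}:Z\times I\to X$ with $\widetilde{G}(\,\cdot\,,0)=\bar{h}$ and $f'\circ\widetilde{G}=\widetilde{H}$. Transposing back through the exponential law gives a map $G:Z\to X^{I}$ with $G_{0}=\Gamma_{U}\circ h$ and $f'\circ G_{t}=H_{t}$ for all $t\in I$. Now the pair $(h,G)$ is exactly admissible data for the E-fibration property of $f$: $h:Z\to X_{U}$, $G:Z\to X^{I}$, and $G_{0}=\Gamma_{U}\circ h$. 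By Definition \ref{E1} (equivalently Theorem \ref{E2}) there is a map $F:Z\to (X_{U'})^{I}$ with $F_{0}=f\circ h$ and $\Gamma_{U'}\circ F_{t}=G_{t}$ for all $t\in I$. This $F$ is the required lift: $F_{0}=f\circ h$ is the composite of $h$ with the connecting map $f$ of the E-triple, and $(f'\circ\Gamma_{U'})\circ F_{t}=f'\circ(\Gamma_{U'}\circ F_{t})=f'\circ G_{t}=H_{t}$ for all $t\in I$. Hence the E-triple $(f'\circ\Gamma_{U}|X_{U}, f'\circ\Gamma_{U'}|X_{U'}, X'_{f})$ has the E-fibration property.

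The substantive content is entirely the two-step lift; the only place requiring genuine care is the transposition bookkeeping — verifying that continuity is preserved in passing between a map $Z\to Y^{I}$ (compact-open topology) and a homotopy $Z\times I\to Y$ — which is where the Hausdorff and compactness hypotheses enter. Everything else is a routine diagram chase, and I expect no real obstacle beyond keeping the indices $H_{t}$, $G_{t}$, $F_{t}$ and the composites $f'\circ\Gamma_{U}$, $f'\circ\Gamma_{U'}$ straight.
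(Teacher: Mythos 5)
Your proposal is correct and follows essentially the same two-step argument as the paper: first lift $H$ through the Hurewicz fibration $f'$ to get a map into $X^{I}$, then feed that lift into the E-fibration property of $f$ and verify $(f'\circ\Gamma_{U'})\circ F_{t}=H_{t}$. The only difference is that you make explicit the exponential-law transposition between $Z\to (X')^{I}$ and $Z\times I\to X'$, which the paper's proof performs silently.
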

\begin{proof}
Let $Z$ be any space and let  $h:Z\to X_{U}$   and $H:Z\to X^{'I}$
be any two maps  with $H_{0}=(f'\circ \Gamma_{U})\circ h=f'\circ
(\Gamma_{U}\circ h)$. Since $f'$ is  a Hurewicz fibration, then
there exists  a map $F':Z\to X^{I}$ such that
$F'_{0}=\Gamma_{U}\circ h$ and $f'\circ F'_{t}=H_{t}$ for all
$t\in I$. Since $f$ is an E-fibration, then there exists a map
$F:Z\to (X_{U'})^{I}$ such that $F_{0}= f\circ h $ and
$\Gamma_{U'}\circ F_{t}=F'_{t}$ for all $t\in I$. Then, \[(f'\circ
\Gamma_{U'})\circ F_{t}=f'\circ(\Gamma_{U'}\circ F_{t})=f'\circ
F'_{t}=H_{t}\] for all $t\in I$. Hence the
 E-triple $(f'\circ \Gamma_{U}|X_{U}, f'\circ \Gamma_{U'}|X_{U'}, X'_{f})$ has an  E-fibration property.
\end{proof}
\noindent Theorems \ref{E5}, \ref{E6} and \ref{E7} give the
relations between E-fibrations and Hurewicz fibrations.
\begin{thm}\label{E5}
 \emph{ Let $f: X_{U} \to  X_{U'}$  be  an  E-embedding from  orbifold chart
$(X_{U}, G_{U}, \Gamma_{U})$ into   $(X_{U'}, G_{U'},
\Gamma_{U'})$.  If $\Gamma_{U}$ or $\Gamma_{U'}$ is a Hurewicz
fibration then $f$ is  an  E-fibration.}
\end{thm}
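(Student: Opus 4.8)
The plan is to treat the two hypotheses separately, each time reducing the defining condition of an $E$-fibration (Definition \ref{E1}) to the homotopy lifting property of the relevant map (Definition \ref{Pr1+1}), while moving data back and forth across the exponential correspondence $\mathrm{Map}(Z\times I,X)\cong\mathrm{Map}(Z,X^I)$, which is available since $I=[0,1]$ is compact Hausdorff and all spaces are Hausdorff by the standing hypothesis. The identity that makes everything fit together is the one built into the notion of embedding, $\Gamma_{U'}\circ f=\Gamma_U$ (Definition \ref{Or1}). Concretely, given a test space $Z$ and maps $h\colon Z\to X_U$ and $H\colon Z\to X^I$ with $H_0=\Gamma_U\circ h$, I would first pass to the adjoint $\widehat H\colon Z\times I\to X$, which satisfies $\widehat H(z,0)=\Gamma_U(h(z))$ for all $z\in Z$.

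If $\Gamma_{U'}$ is a Hurewicz fibration, I would take $f\circ h\colon Z\to X_{U'}$ as the map to be lifted. Since $\Gamma_{U'}\circ(f\circ h)=(\Gamma_{U'}\circ f)\circ h=\Gamma_U\circ h=\widehat H_0$, Definition \ref{Pr1+1} applied to $\Gamma_{U'}$ supplies a map $\widehat F\colon Z\times I\to X_{U'}$ with $\widehat F_0=f\circ h$ and $\Gamma_{U'}\circ\widehat F=\widehat H$. Transposing $\widehat F$ back across the exponential law yields $F\colon Z\to (X_{U'})^I$ with $F_0=f\circ h$ and $\Gamma_{U'}\circ F_t=H_t$ for every $t\in I$, which is exactly what Definition \ref{E1} requires, so $f$ is an $E$-fibration.

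If instead $\Gamma_U$ is a Hurewicz fibration, I would lift along $\Gamma_U$ itself: Definition \ref{Pr1+1} applied to $\Gamma_U$ with the map $h$ and homotopy $\widehat H$ (using $\Gamma_U\circ h=\widehat H_0$) gives $\widehat G\colon Z\times I\to X_U$ with $\widehat G_0=h$ and $\Gamma_U\circ\widehat G=\widehat H$. Then $\widehat F:=f\circ\widehat G\colon Z\times I\to X_{U'}$ is again continuous, $\widehat F_0=f\circ h$, and $\Gamma_{U'}\circ\widehat F=(\Gamma_{U'}\circ f)\circ\widehat G=\Gamma_U\circ\widehat G=\widehat H$; transposing back produces the required $F\colon Z\to (X_{U'})^I$. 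I do not anticipate a genuine obstacle here, since the argument is essentially a diagram chase; the only points that need a little care are the continuity of the transposed maps (guaranteed by local compactness of $I$ and the Hausdorff assumption) and invoking $\Gamma_{U'}\circ f=\Gamma_U$ at the two places where it is needed.
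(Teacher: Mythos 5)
Your proposal is correct and follows essentially the same route as the paper: you split into the two cases, use the embedding identity $\Gamma_{U'}\circ f=\Gamma_U$ to see that $f\circ h$ is a valid initial lift when $\Gamma_{U'}$ fibers, and compose the $\Gamma_U$-lift with $f$ in the other case. The only difference is that you make explicit the exponential-law transposition between $Z\times I\to X$ and $Z\to X^I$, which the paper passes over silently; this is just added care, not a different argument.
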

\begin{proof}
Let $Z$ be any space and let  $h:Z\to X_{U}$   and $H:Z\to X^{I}$
be any two maps  with $H_{0}=  \Gamma_{U} \circ h$.  If $
\Gamma_{U}$ is  a Hurewicz fibration, then there exists a  map
$F':Z\to (X_{U})^{I}$ such that $F'_{0}= h$ and $\Gamma_{U}\circ
F'_{t}=H_{t}$ for all $t\in I$. Define a  map $F:Z\to
(X_{U'})^{I}$ by $F_{t}=f\circ F'_{t}$ for all $t\in I$. Note that
$F_{0}=f\circ F'_{0}=f\circ h$ and
\[\Gamma_{U'}\circ F_{t}=\Gamma_{U'}\circ(f\circ F'_{t})=(\Gamma_{U'}\circ f)\circ F'_{t}= \Gamma_{U}\circ F'_{t}=H_{t}\] for all $t\in I$.
 Hence,  $ f$ is an  E-fibration.  If    $
\Gamma_{U'}$ is  a Hurewicz fibration and since \[H_{0}=
\Gamma_{U} \circ h= (\Gamma_{U'}\circ f) \circ h=
 \Gamma_{U'}\circ (f  \circ h),\] then there exists a  map $F :Z\to
(X_{U'})^{I}$ such that $F_{0}= f  \circ h$ and $\Gamma_{U'}\circ
F_{t}=H_{t}$ for all $t\in I$.
 Hence,  $ f$ is a  E-fibration.
\end{proof}

\begin{thm}\label{E6}
 \emph{ Let $f: X_{U} \to  X_{U'}$  be  an  E-fibration embedding from  orbifold chart
$(X_{U}, G_{U}, \Gamma_{U})$ into   $(X_{U'}, G_{U'},
\Gamma_{U'})$.  If $f$ is a  surjective, then  $\Gamma_{U'}$  is a
Hurewicz fibration. }
\end{thm}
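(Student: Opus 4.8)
The plan is to unwind Definition \ref{Pr1+1} for the map $\Gamma_{U'}$ and to feed the test data directly into the E-fibration property of $f$ after transporting it along the inverse of $f$. The first step is the observation that, since $f$ is an embedding in the sense of Definition \ref{Or1}, it restricts to a homeomorphism $X_{U}\to f(X_{U})$; surjectivity of $f$ then gives $f(X_{U})=X_{U'}$, so $f\colon X_{U}\to X_{U'}$ is itself a homeomorphism. In particular $f^{-1}\colon X_{U'}\to X_{U}$ is continuous, and from $\Gamma_{U'}\circ f=\Gamma_{U}$ we get $\Gamma_{U'}=\Gamma_{U}\circ f^{-1}$. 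This is exactly where both hypotheses (embedding and surjective) are used.

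Next I would take an arbitrary space $Z$ together with maps $h'\colon Z\to X_{U'}$ and $H\colon Z\times I\to X$ satisfying $H_{0}=\Gamma_{U'}\circ h'$, and regard $H$ equivalently as a map $Z\to X^{I}$ (via the exponential law, legitimate because $I$ is locally compact Hausdorff and all spaces are Hausdorff, so currying and uncurrying preserve continuity). Put $h:=f^{-1}\circ h'\colon Z\to X_{U}$. Then
\[
\Gamma_{U}\circ h=(\Gamma_{U'}\circ f)\circ(f^{-1}\circ h')=\Gamma_{U'}\circ h'=H_{0},
\]
so the pair $(h,H)$ is admissible input for the E-fibration $f$. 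Definition \ref{E1} then yields a map $F\colon Z\to (X_{U'})^{I}$ with $F_{0}=f\circ h$ and $\Gamma_{U'}\circ F_{t}=H_{t}$ for all $t\in I$. Since $F_{0}=f\circ h=f\circ f^{-1}\circ h'=h'$, and passing back across the exponential law turns $F$ into a homotopy $Z\times I\to X_{U'}$ with $\Gamma_{U'}\circ F=H$, we have produced precisely the lift required by Definition \ref{Pr1+1}. Hence $\Gamma_{U'}$ is a Hurewicz fibration.

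The argument is essentially bookkeeping, so I do not anticipate a genuine obstacle. The only two points that need care are: (i) verifying that a surjective embedding really is a homeomorphism onto $X_{U'}$ (so that $h:=f^{-1}\circ h'$ is defined and continuous and so that $F_{0}=f\circ h$ collapses to $h'$) — this is the single place the surjectivity assumption is indispensable; and (ii) the harmless but necessary use of the adjunction $\mathrm{Map}(Z\times I,X)\cong\mathrm{Map}(Z,X^{I})$ to move between the ``homotopy'' formulation of Definition \ref{Pr1+1} and the ``path-space'' formulation of Definition \ref{E1}, which is valid under the paper's standing Hausdorff hypothesis and the compact-open topology on $X^{I}$.
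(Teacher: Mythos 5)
Your proposal is correct and follows essentially the same route as the paper: both transport the lifting problem along an inverse of $f$ (the paper writes it as a map $g$ with $f\circ g=id_{X_{U'}}$, you write $f^{-1}$), check $\Gamma_{U}\circ(f^{-1}\circ h')=H_{0}$, and apply the E-fibration property of $f$ to obtain the required lift with $F_{0}=h'$. If anything, you are more careful than the paper, which justifies the continuous right inverse by surjectivity alone, whereas it is really the embedding (homeomorphism onto image) combined with surjectivity that makes $f^{-1}$ continuous, exactly as you note.
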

\begin{proof}
Let $Z$ be any space and let  $h:Z\to X_{U'}$   and $H:Z\to X^{I}$
be any two maps  with $H_{0}=  \Gamma_{U'} \circ h$.  Since $f$ is
a surjective, then there exists a map $g: X_{U'} \to  X_{U}$ such
that $f\circ g=id_{X_{U'}}$.  Hence, \[H_{0}=  \Gamma_{U'} \circ h=
(\Gamma_{U}\circ g) \circ h=  \Gamma_{U}\circ (g \circ h).\] Then,
there exists a  map $F:Z\to (X_{U'})^{I}$ such that
\[F_{0}=f\circ (g \circ h) = (f\circ g) \circ h=  id_{X_{U'}}\circ h= h \] and $\Gamma_{U'}\circ F_{t}= H_{t}$
for all $t\in I$. Thus, $\Gamma_{U'}$  is a Hurewicz fibration.
\end{proof}
\begin{thm}\label{E7}
 \emph{ Let $f: X_{U} \to  X_{U'}$  be  an  E-fibration embedding from  orbifold chart
$(X_{U}, G_{U}, \Gamma_{U})$ into   $(X_{U'}, G_{U'},
\Gamma_{U'})$.  If $f$ is a  homeomorphism, then  $\Gamma_{U}$ and
$\Gamma_{U'}$  are  Hurewicz fibrations. }
\end{thm}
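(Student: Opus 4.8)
The plan is to obtain both conclusions with essentially no new work, by combining Theorem \ref{E6} with the defining relation of an embedding. Since a homeomorphism is in particular surjective, Theorem \ref{E6} applies at once and gives that $\Gamma_{U'}$ is a Hurewicz fibration. Thus it remains only to treat $\Gamma_{U}$, and here I would exploit the identity $\Gamma_{U}=\Gamma_{U'}\circ f$ from Definition \ref{Or1} together with the fact that $f$ is now invertible with continuous inverse.

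Concretely, to verify Definition \ref{Pr1+1} for $\Gamma_{U}:X_{U}\to X$, let $Z$ be a space and let $h:Z\to X_{U}$ and $H:Z\times I\to X$ be maps with $H_{0}=\Gamma_{U}\circ h=\Gamma_{U'}\circ(f\circ h)$. Applying the fibration property of $\Gamma_{U'}$ (just established) to the pair $(f\circ h,\,H)$ produces a map $F':Z\times I\to X_{U'}$ with $F'_{0}=f\circ h$ and $\Gamma_{U'}\circ F'=H$. Set $F=f^{-1}\circ F':Z\times I\to X_{U}$; this is continuous because $f^{-1}$ is continuous, it satisfies $F_{0}=f^{-1}\circ f\circ h=h$, and $\Gamma_{U}\circ F=(\Gamma_{U'}\circ f)\circ(f^{-1}\circ F')=\Gamma_{U'}\circ F'=H$, as required. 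Hence $\Gamma_{U}$ is a Hurewicz fibration. (Equivalently, one may invoke the standard fact that a homeomorphism followed by a Hurewicz fibration is again a Hurewicz fibration.)

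A fully self-contained variant for $\Gamma_{U}$ would avoid Theorem \ref{E6} and use the E-fibration property of $f$ directly: for $h,H$ as above, Definition \ref{E1} yields $F:Z\to(X_{U'})^{I}$ with $F_{0}=f\circ h$ and $\Gamma_{U'}\circ F_{t}=H_{t}$, and then $t\mapsto f^{-1}\circ F_{t}$ is the desired lift into $X_{U}$. I do not expect any genuine obstacle in this proof; the only point deserving a word of care is the continuity of the lifted map, which follows from the continuity of $f^{-1}$ and the usual exponential-law behaviour of the compact-open topology, everything else being a routine diagram chase resting on Theorem \ref{E6} and the relation $\Gamma_{U'}\circ f=\Gamma_{U}$.
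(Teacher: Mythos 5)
Your proposal is correct and matches the paper's argument: the paper likewise deduces that $\Gamma_{U'}$ is a Hurewicz fibration from Theorem \ref{E6}, then lifts $H$ into $X_{U'}$ and postcomposes with $f^{-1}$, using $\Gamma_{U}=\Gamma_{U'}\circ f$ to verify the required identities. Your ``self-contained variant'' (obtaining the intermediate lift directly from the E-fibration property of $f$ rather than from the fibration $\Gamma_{U'}$) is in fact exactly how the paper produces $F'$, so the two proofs coincide in substance.
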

\begin{proof}
Due to theorem 2.6,  $\Gamma_{U'}$  is   a Hurewicz fibration. Let
$Z$ be any space and let  $h:Z\to X_{U}$ and $H:Z\to X^{I}$ be any
two maps  with $H_{0}=  \Gamma_{U} \circ h$.   Then, there exists a
 map $F':Z\to (X_{U'})^{I}$ such that $F'_{0}=f\circ h$ and
$\Gamma_{U'}\circ F'_{t}= H_{t}$ for all $t\in I$. Define a
 map   $F:Z\to (X_{U})^{I}$ by $ F_{t}=f^{-1}\circ F'_{t}$ for
all $t\in I$. It is easy to note that \[F_{0}= f^{-1}\circ F'_{0}= f^{-1}\circ
(f\circ h)=h\] and \[ \Gamma_{U} \circ F_{t}= \Gamma_{U} \circ
(f^{-1}\circ F'_{t})=(\Gamma_{U} \circ  f^{-1})\circ F'_{t}=
 \Gamma_{U'} \circ F'_{t}=H_{t}
\] for
all $t\in I$.  Thus, $\Gamma_{U}$ is a Hurewicz fibration.
\end{proof}
\noindent The following theorem shows  that the product  of two
E-fibrations has an  E-fibration property.
\begin{thm}\label{E8}
\emph{Let $f: X_{U} \to  X_{U'}$ and $g: Y_{V} \to  Y_{V'}$  be  two
 E-fibration embeddings  over two spaces $X$ and $Y$, respectively.    Then, the
 E-triple \[( \Gamma_{U}\times \Gamma_{V}|X_{U}\times Y_{V},  \Gamma_{U'}\times \Gamma_{V'}|X_{U'}\times Y_{V'}, (X\times Y)_{f\times g})\]
 has an  E-fibration property.}
\end{thm}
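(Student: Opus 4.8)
The plan is to reduce the assertion to the two hypotheses by splitting every datum into its $X$-coordinate and its $Y$-coordinate, invoking the E-fibration embedding property of $f$ and of $g$ separately, and then recombining. The tool that makes the recombination work is the natural homeomorphism $(A\times B)^{I}\cong A^{I}\times B^{I}$ for the compact-open topology, valid because $I$ is compact Hausdorff: a continuous map from a space $Z$ into $(A\times B)^{I}$ is the same as a pair consisting of a continuous map $Z\to A^{I}$ and a continuous map $Z\to B^{I}$, the correspondence being post-composition of paths with the two coordinate projections.

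First I would fix an arbitrary space $Z$ and maps $h:Z\to X_{U}\times Y_{V}$ and $H:Z\to (X\times Y)^{I}$ with $H_{0}=(\Gamma_{U}\times\Gamma_{V})\circ h$. Writing $h=(h_{1},h_{2})$ for the coordinates of $h$, where $h_{1}:Z\to X_{U}$ and $h_{2}:Z\to Y_{V}$, and splitting $H$ into $H^{X}:Z\to X^{I}$ and $H^{Y}:Z\to Y^{I}$ so that $H_{t}=(H^{X}_{t},H^{Y}_{t})$ under the homeomorphism above, the compatibility condition $H_{0}=(\Gamma_{U}\times\Gamma_{V})\circ h$ decouples into the two equalities $H^{X}_{0}=\Gamma_{U}\circ h_{1}$ and $H^{Y}_{0}=\Gamma_{V}\circ h_{2}$.

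Next I would apply Definition~\ref{E1} to the E-fibration embedding $f$ with the data $Z$, $h_{1}$, $H^{X}$, obtaining a map $F^{X}:Z\to (X_{U'})^{I}$ with $F^{X}_{0}=f\circ h_{1}$ and $\Gamma_{U'}\circ F^{X}_{t}=H^{X}_{t}$ for all $t\in I$; symmetrically, applying it to $g$ with the data $Z$, $h_{2}$, $H^{Y}$ gives $F^{Y}:Z\to (Y_{V'})^{I}$ with $F^{Y}_{0}=g\circ h_{2}$ and $\Gamma_{V'}\circ F^{Y}_{t}=H^{Y}_{t}$. Then I would set $F:Z\to (X_{U'}\times Y_{V'})^{I}$ by $F_{t}=(F^{X}_{t},F^{Y}_{t})$ for all $t\in I$, using the homeomorphism $(X_{U'}\times Y_{V'})^{I}\cong(X_{U'})^{I}\times(Y_{V'})^{I}$ to ensure $F$ is continuous. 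Finally, a direct check gives $F_{0}=(F^{X}_{0},F^{Y}_{0})=(f\circ h_{1},g\circ h_{2})=(f\times g)\circ h$ and $(\Gamma_{U'}\times\Gamma_{V'})\circ F_{t}=(\Gamma_{U'}\circ F^{X}_{t},\Gamma_{V'}\circ F^{Y}_{t})=(H^{X}_{t},H^{Y}_{t})=H_{t}$ for every $t\in I$; this is exactly the E-fibration property demanded of the E-triple $(\Gamma_{U}\times\Gamma_{V}|X_{U}\times Y_{V},\Gamma_{U'}\times\Gamma_{V'}|X_{U'}\times Y_{V'},(X\times Y)_{f\times g})$, where one also uses the identity $(\Gamma_{U'}\times\Gamma_{V'})\circ(f\times g)=\Gamma_{U}\times\Gamma_{V}$ which is what makes this triple an E-triple.

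I expect the one point that needs genuine care to be the continuity claim for $F$, i.e. the identification $(X_{U'}\times Y_{V'})^{I}\cong (X_{U'})^{I}\times (Y_{V'})^{I}$ in the compact-open topology, equivalently that $z\mapsto\bigl(t\mapsto(F^{X}(z)(t),F^{Y}(z)(t))\bigr)$ is continuous into the path space; this follows from the exponential law together with local compactness of $I$. Everything else is routine bookkeeping with the coordinate projections, so no further obstacle is anticipated.
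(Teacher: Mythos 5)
Your proposal is correct and follows essentially the same route as the paper's proof: split $h$ and $H$ into their $X$- and $Y$-coordinates via the projections, lift each factor using the E-fibration property of $f$ and of $g$, and recombine into $F_{t}=(F^{X}_{t},F^{Y}_{t})$. The only difference is that you make explicit the continuity point via the identification $(A\times B)^{I}\cong A^{I}\times B^{I}$, which the paper leaves implicit.
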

\begin{proof}
Let $Z$ be any space and let  $h:Z\to X_{U}\times Y_{V}$   and
$H:Z\to (X\times Y)^{I}$ be any two maps  with
$H_{0}=(\Gamma_{U}\times \Gamma_{V})\circ h$. Define two maps
\[H^{X}:Z\to X^{I} \mbox{ and }H^{Y}:Z\to Y^{I}\] by
\[H^{X}_{t}=\pi_{1}\circ H_{t} \mbox{ and }H^{Y}_{t}=\pi_{2}\circ
H_{t}\] for all $t\in I$, where  $\pi_{1}$ and $ \pi_{2}$ are the
usual first and second projections. It is easy to note that
\[H^{X}_{0}=\pi_{1}\circ H_{0}=\pi_{1}\circ[(\Gamma_{U}\times \Gamma_{V})\circ h]=\Gamma_{U}\circ (\pi_{1}\circ h).\] Since $f$ is an  E-fibration, then
there exists a  map $F':Z\to (X_{U'})^{I}$ such that
$F'_{0}=f\circ (\pi_{1}\circ h)$ and $\Gamma_{U'}\circ
F'_{t}=H^{X}_{t}$ for all $t\in I$. Similarly, for an E-fibration
embedding $g$,  there exists a  map $F'':Z\to (Y_{V'})^{I}$ such
that $F''_{0}=g\circ (\pi_{2}\circ h)$ and $\Gamma_{V'}\circ
F''_{t}=H^{Y}_{t}$ for all $t\in I$. Define a  map $F:Z\to
(X_{U'}\times Y_{V'})^{I}$ by $F_{t}=F'_{t}\times F''_{t}$ for all
$t\in I$. We observe that
 \begin{eqnarray*}
F_{0}=F'_{0}\times F''_{0}&=& [f\circ (\pi_{1}\circ h)]\times
[g\circ (\pi_{2}\circ h)]=(f\times g)\circ [(\pi_{1}\times
\pi_{2})\circ h]\\
&=& (f\times g)\circ h.
\end{eqnarray*}
and
\begin{eqnarray*}
(\Gamma_{U'}\times \Gamma_{V'})\circ F_{t}&=& (\Gamma_{U'}\times
\Gamma_{V'})\circ (F'_{t}\times F''_{t})=
 (\Gamma_{U'}\circ F'_{t}) \times ( \Gamma_{V'})\circ
 F''_{t})\\
 &=&  H^{X}_{t} \times H^{Y}_{t}=  (\pi_{1}\circ H_{t})\times
 (\pi_{2}\circ H_{t})\\
 &=&  (\pi_{1}\times
  \pi_{2})\circ H_{t}=  H_{t}
\end{eqnarray*}
 for all $t\in I$. Hence, the proof is completed.
\end{proof}


\begin{thm}\label{E9}
\emph{ Let $f: X_{U} \to  X_{U'}$  be  an  E-fibration embedding
from  orbifold chart $(X_{U}, G_{U}, \Gamma_{U})$ into   $(X_{U'},
G_{U'}, \Gamma_{U'})$ and $S$ be any subspace of $X$. Then, the
E-triple
 \[(\Gamma_{U_{s}}|\Gamma_{U}^{-1}(S),
\Gamma_{U'_{s}}|\Gamma_{U'}^{-1}(S), S_{f_{s}})\] has an
E-fibration property where $\Gamma_{U_{s}}$, $\Gamma_{U'_{s}}$ and
$f_{s}$  are the restriction maps of $\Gamma_{U}$, $\Gamma_{U'}$
and $f$ on $\Gamma_{U}^{-1}(S)$, $\Gamma_{U'}^{-1}(S)$ and
$\Gamma_{U}^{-1}(S)$, respectively.}
\end{thm}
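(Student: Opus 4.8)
The plan is to verify the E-fibration property for the restricted E-triple directly from the definition, feeding the given lifting data into the E-fibration property of $f$ and then checking that the resulting lift actually lands in the right restricted path space. First I would take an arbitrary space $Z$ together with maps $h:Z\to \Gamma_{U}^{-1}(S)$ and $H:Z\to S^{I}$ satisfying $H_{0}=\Gamma_{U_{s}}\circ h$. Composing with the inclusions $\Gamma_{U}^{-1}(S)\hookrightarrow X_{U}$ and $S\hookrightarrow X$, I obtain maps $\widehat{h}:Z\to X_{U}$ and $\widehat{H}:Z\to X^{I}$ (using that $S^{I}$ includes continuously into $X^{I}$, since a path in $S$ is in particular a path in $X$). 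By construction $\widehat{H}_{0}=\Gamma_{U}\circ\widehat{h}$, so since $f$ is an E-fibration there is a map $\widehat{F}:Z\to (X_{U'})^{I}$ with $\widehat{F}_{0}=f\circ\widehat{h}$ and $\Gamma_{U'}\circ\widehat{F}_{t}=\widehat{H}_{t}$ for all $t\in I$.

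Next I would argue that $\widehat{F}$ in fact factors through the subspace of paths lying in $\Gamma_{U'}^{-1}(S)$. For each $z\in Z$ and $t\in I$ we have $\Gamma_{U'}(\widehat{F}(z)(t))=\widehat{H}(z)(t)\in S$, hence $\widehat{F}(z)(t)\in\Gamma_{U'}^{-1}(S)$; thus the image of $\widehat{F}$ lands in $(\Gamma_{U'}^{-1}(S))^{I}\subseteq (X_{U'})^{I}$, and co-restricting gives a map $F:Z\to (\Gamma_{U'}^{-1}(S))^{I}$. Here I would invoke the standard fact that for a subspace $A\subseteq Y$ the path space $A^{I}$ (with compact-open topology) is the subspace of $Y^{I}$ consisting of paths with image in $A$, so the co-restriction is continuous. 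Then I would check the two required identities: $F_{0}=\widehat{F}_{0}=f\circ\widehat{h}=f_{s}\circ h$ (noting $f_{s}=f|_{\Gamma_{U}^{-1}(S)}$ and that $f$ maps $\Gamma_{U}^{-1}(S)$ into $\Gamma_{U'}^{-1}(S)$ because $\Gamma_{U'}\circ f=\Gamma_{U}$), and $\Gamma_{U'_{s}}\circ F_{t}=\Gamma_{U'}\circ\widehat{F}_{t}=\widehat{H}_{t}=H_{t}$ for all $t\in I$. This establishes the E-fibration property.

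The main obstacle I anticipate is purely topological rather than formal: justifying that the relevant lift $\widehat{F}$ can be co-restricted continuously to the path space of the subspace $\Gamma_{U'}^{-1}(S)$, i.e. the identification $(\Gamma_{U'}^{-1}(S))^{I}$ with $\{\gamma\in (X_{U'})^{I}:\gamma(I)\subseteq\Gamma_{U'}^{-1}(S)\}$ as topological spaces. This is a genuine point about the compact-open topology (true because $I$ is compact and the subspace topology on mapping spaces into a subspace agrees with the compact-open topology there), but it is the only place where something beyond diagram-chasing is needed; everything else is formal manipulation of the defining lifting condition. A secondary, minor point worth stating explicitly is that $f$ restricts to a map $\Gamma_{U}^{-1}(S)\to\Gamma_{U'}^{-1}(S)$, which follows immediately from $\Gamma_{U'}\circ f=\Gamma_{U}$, so that the notation $f_{s}$ and the target of $F$ are consistent.
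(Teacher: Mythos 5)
Your proposal is correct and follows essentially the same route as the paper's proof: compose with the inclusions, lift via the E-fibration property of $f$, observe that $\Gamma_{U'}\circ\widehat{F}_{t}=\widehat{H}_{t}$ forces the lift to land in $\Gamma_{U'}^{-1}(S)$, co-restrict, and verify the two conditions. Your explicit remarks on the continuity of the co-restriction into $(\Gamma_{U'}^{-1}(S))^{I}$ and on $f$ restricting to $\Gamma_{U}^{-1}(S)\to\Gamma_{U'}^{-1}(S)$ are points the paper leaves implicit, so nothing is missing.
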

\begin{proof}
Let $Z$ be any space and let  $h:Z\to \Gamma_{U}^{-1}(S)$   and
$H:Z\to S^{I}$ be any two maps  with $H_{0}= \Gamma_{U_{s}}\circ
h$. Let $j_{u}:\Gamma_{U}^{-1}(S)\to X_{U}$,
$j_{u'}:\Gamma_{U'}^{-1}(S)\to X_{U'}$ and $j:S\to X$ be inclusion
maps. Define  a  map  $H':Z\to X^{I}$ by $H'_{t}=j\circ H_{t}$ for
all $t\in I$. Since $f$ is an E-fibration embedding and
\[H'_{0}=j\circ H_{0}=H_{0}=\Gamma_{U_{s}}\circ
h=(\Gamma_{U_{s}}\circ j_{u})\circ h= \Gamma_{U_{s}}\circ ( j_{u}
\circ h), \] then there exists    map $F:Z\to (X_{U'})^{I}$ such
that $F_{0}=f\circ (j_{u}\circ h)$ and $\Gamma_{U'}\circ
F_{t}=H'_{t}$ for all $t\in I$. Since
\[\Gamma_{U'}[F_{t}(z)]=H'_{t}=j\circ H_{t}=H_{t}\in S\] then
$F(z)(t)\in \Gamma_{U'}^{-1}(S)$ for all $z\in Z$, $t\in I$. Hence,
   $F$ is a   homotopy from $Z$ into
$(\Gamma_{U'}^{-1}(S))^{I}$ and it is easy to  note that
 \[F_{0}=f\circ
(j_{u}\circ h)=(f\circ j_{u})\circ h=f_{s}\circ h\] and
\[\Gamma_{U's}\circ F_{t}=(\Gamma_{U'}\circ j_{u'})\circ F_{t}= \Gamma_{U'}\circ (j_{u'} \circ F_{t})=\Gamma_{U'} \circ F_{t}=H_{t}\] for all $t\in I$.
 Then, the E-triple
 $(\Gamma_{U_{s}}|\Gamma_{U}^{-1}(S),
\Gamma_{U'_{s}}|\Gamma_{U'}^{-1}(S), S_{f_{s}})$ has an E-fibration
property.
\end{proof}


\begin{rem}\label{E10}
\emph{For any   embedding   $f: X_{U} \to  X_{U'}$   of  orbifold
chart $(X_{U}, G_{U}, \Gamma_{U})$ into orbifold chart $(X_{U'},
G_{U'}, \Gamma_{U'})$ and for any map $P:X'\to X$ of a space $X'$
into a space $X$,  define the maps $P_{1}:X'(U)\to X'$,
$P_{1}':X'(U')\to X'$ and $P_{f}:X'(U)\to X'(U')$ by
\[P_{1}(x',r)=x',\mbox{ }P_{1}'(x',r')=x' \mbox{ and } P_{f}(x',r)=(x',f(r)),\] respectively, for all $(x',r)\in X'(U)$ and $(x',r')\in X'(U')$, where
\[X'(U)=\{(x',r)\in  X'\times X_{U}: \Gamma_{U}(r)=P(x')\}\] and \[ X'(U')= \{(x',r')\in  X'\times X_{U'}: \Gamma_{U'}(r')=P(x')\}.\]
It is easy to note that for all  $(x',r)\in X'(U)$ we have that \[(P_{1}'\circ
P_{f})(x',r)=x'= P_{1}(x',r).\]  That is, $(P_{1}|X'(U),
P_{1}'|X'(U'), X'_{P_{f}})$  is an E-triple which is called the
\emph{E-pullback} of  an embedding   $f$  by a map $P$.}
\end{rem}

\begin{thm}\label{E11}
\emph{For any   embedding   E-fibration  $f: X_{U} \to X_{U'}$ of
orbifold chart $(X_{U}, G_{U}, \Gamma_{U})$ into orbifold chart
$(X_{U'}, G_{U'}, \Gamma_{U'})$, the E-pullback of $f$  by any map
$P:X'\to X$ has an E-fibration property.}
 \end{thm}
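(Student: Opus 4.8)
The plan is to reduce the E-fibration property of the E-pullback directly to the E-fibration property of $f$, by splitting the given data into its two coordinates and transporting a path-homotopy in $X'$ down to $X$ via $P$. First I would fix an arbitrary space $Z$ together with maps $h:Z\to X'(U)$ and $H:Z\to (X')^{I}$ satisfying $H_{0}=P_{1}\circ h$. Writing $h(z)=(h_{1}(z),h_{2}(z))$ with $h_{1}:Z\to X'$ and $h_{2}:Z\to X_{U}$, the definition of $X'(U)$ gives $\Gamma_{U}\circ h_{2}=P\circ h_{1}$, while the compatibility hypothesis says $h_{1}=P_{1}\circ h=H_{0}$.

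Next I would push $H$ forward along $P$: let $\widehat{H}:Z\to X^{I}$ be defined by $\widehat{H}_{t}=P\circ H_{t}$ (post-composition with $P$, which is continuous on path spaces). Then $\widehat{H}_{0}=P\circ H_{0}=P\circ h_{1}=\Gamma_{U}\circ h_{2}$, so the pair $(h_{2},\widehat{H})$ is admissible data for the E-fibration $f$. By Definition \ref{E1}, there is a map $G:Z\to (X_{U'})^{I}$ with $G_{0}=f\circ h_{2}$ and $\Gamma_{U'}\circ G_{t}=\widehat{H}_{t}=P\circ H_{t}$ for all $t\in I$.

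Then I would assemble the lift. Define $F:Z\to (X'(U'))^{I}$ by $F_{t}(z)=(H_{t}(z),G_{t}(z))$. The identity $\Gamma_{U'}\circ G_{t}=P\circ H_{t}$ is exactly the condition that $(H_{t}(z),G_{t}(z))$ lie in $X'(U')$, so $F$ is well defined; since $X'(U')$ carries the subspace topology from $X'\times X_{U'}$ and $I$ is locally compact Hausdorff, the exponential law shows $F$ is continuous, being adjoint to the map $Z\times I\to X'(U')$ obtained by pairing the adjoints of $H$ and $G$. Finally $F_{0}(z)=(H_{0}(z),G_{0}(z))=(h_{1}(z),f(h_{2}(z)))=P_{f}(h_{1}(z),h_{2}(z))=(P_{f}\circ h)(z)$, and $P_{1}'\circ F_{t}=H_{t}$ because $P_{1}'$ is first-coordinate projection on $X'(U')$. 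This shows that $(P_{1}|X'(U),\,P_{1}'|X'(U'),\,X'_{P_{f}})$ has the E-fibration property.

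The only genuinely delicate point is the bookkeeping around path spaces: that post-composition with $P$ and the coordinatewise pairing $(H,G)\mapsto F$ are continuous for the compact-open topologies, and that $\Gamma_{U'}\circ F_{t}$ is correctly read as the composite of $\Gamma_{U'}$ with the evaluation $F_{t}$. All of this is routine via the exponential law for the locally compact Hausdorff space $I$, so I expect no substantive obstacle; the conceptual core is simply the choice $\widehat{H}=P\circ H$ as the base homotopy fed into the E-fibration $f$, after which the lift is forced to be the pair $(H,G)$.
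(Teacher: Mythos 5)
Your proposal is correct and follows essentially the same route as the paper's own proof: push the homotopy $H$ down to $X$ via $P$, lift it through the E-fibration $f$, and pair the resulting lift $G$ with $H$ to obtain $F_{t}=(H_{t},G_{t})$ into $X'(U')$ with $F_{0}=P_{f}\circ h$ and $P_{1}'\circ F_{t}=H_{t}$. Your version is in fact slightly more careful than the paper's, since you explicitly verify that the pair lands in $X'(U')$ and that $F$ is continuous via the exponential law, and you state the initial condition as $H_{0}=P_{1}\circ h$, which is the form required by the definition of the E-fibration property for this E-triple.
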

 \begin{proof}

 Let $Z$ be any space. Let
$h':Z\to X'(U)$ and  $H':Z\to X'^{I}$ be two maps  with
$H'_{0}=P_{1}'\circ h'$. Define a map $h:Z\to X_{U}$ by
$h(z)=\pi_{2}(h'(z))$ and a  map $H:Z\to X^{I}$ by $H(z)=P\circ
H'(z)$ for all $z\in Z$. Thus,
\begin{eqnarray*}
  H(z)(0)&=&(P\circ
H'(z))(0) = P(H'(z)(0))=P[(P_{1}'\circ h')(z)]=P(P_{1}'(h'(z)))\\
  &=&  P(\pi_{1}(h'(z)))=\Gamma_{U}(\pi_{2}(h'(z)))=\Gamma_{U}(h(z))
\end{eqnarray*}
for all $z\in Z$. That is, $H_{0}=\Gamma_{U}\circ h$, where
$\pi_{1}$ and $ \pi_{2}$ are the usual first and second
projections. Since $f$ is an E-fibration, then there exists a
 map $F:Z\to X_{U'}^{I}$ such that $F_{0}=f\circ h$ and
$\Gamma_{U'}\circ F_{t}=H_{t}$ for all $t\in I$.
 Define a  map $F':Z\to X'(U')$ by
$F'(z)(t)=[H'(z)(t),F(z)(t)]$ for all $t\in I, z\in Z$. Note
$P'_{1}\circ F'= H'$ and
\begin{eqnarray*}
  F'(z)(0) &=& [H'(z)(0),F(z)(0)]=[ P_{1}'(h'(z)),f(h(z))]=[\pi_{1}(h'(z)),f(\pi_{2}(h'(z)))] \\
   &=&
   P_{f}[\pi_{1}(h'(z)),\pi_{2}(h'(z))]=P_{f}(h'(z))=(P_{f}\circ h')(z)
\end{eqnarray*}
for all $z\in Z$. Thus, $F'_{0}=P_{f}\circ h'$. Hence, the
E-pullback of $f$  by a  map $P:X'\to X$ has an E-fibration
property.
 \end{proof}

\section{E-lifting function }

In this section, we introduce E-lifting function and study some of its properties.

\begin{defn}\label{E1}
Let    $(\Theta_{2}|X_{2}, \Theta_{1}|X_{1}, X_{\Theta})$ be an
E-triple and let
   \[\bigtriangleup \Theta_{1} =\{( \Theta(x_{2}),\alpha)\in X_{1} \times X^I  :
   \Theta_{2}(x_{2})=\alpha(0)\mbox{ for some } x_{2}\in X_{2}\}.\]
   The map $\lambda_{\Theta} : \bigtriangleup \Theta_{1} \to X_{1}^I$ is called
   an E-lifting function of an E-triple $(\Theta_{2}|X_{2}, \Theta_{1}|X_{1}, X_{\Theta})$ if it satisfies the
   following:
   \begin{enumerate}
    \item  $ \lambda_{\Theta}[  \Theta(x_{2}),\alpha](0)= \Theta(x_{2})$   for all
   $( \Theta(x_{2}),\alpha)\in \bigtriangleup \Theta_{1}$;
    \item $[\Theta_{1} \circ \lambda_{\Theta} (  \Theta(x_{2}),\alpha)](t)= \alpha(t)$  for all
   $( \Theta(x_{2}),\alpha)\in \bigtriangleup \Theta_{1}$ and $t\in I$.
   \end{enumerate}
   
   \end{defn}\label{E19}
Now, we discuss the following theorems related to E-lifting function.

\begin{thm}\label{EL1}
   \emph{The E-triple $(\Theta_{2}|X_{2}, \Theta_{1}|X_{1}, X_{\Theta})$ has  E-fibration
property. if and only if it has an E-lifting function.}
\end{thm}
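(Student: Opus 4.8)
The plan is to prove the two implications separately, in parallel with the classical equivalence between the homotopy lifting property and the existence of a lifting function. Throughout I use the composability relation $\Theta_{1}\circ\Theta=\Theta_{2}$ of the E-triple; in particular, for any $(\Theta(x_{2}),\alpha)\in\bigtriangleup\Theta_{1}$ one automatically has $\Theta_{1}(\Theta(x_{2}))=\Theta_{2}(x_{2})=\alpha(0)$, which will be used silently.

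Assume first that the E-triple admits an E-lifting function $\lambda_{\Theta}:\bigtriangleup\Theta_{1}\to X_{1}^{I}$. To verify the E-fibration property, take a space $Z$ and maps $h:Z\to X_{2}$, $H:Z\to X^{I}$ with $H_{0}=\Theta_{2}\circ h$. I would define $\psi:Z\to\bigtriangleup\Theta_{1}$ by $\psi(z)=(\Theta(h(z)),H(z))$: this lands in $\bigtriangleup\Theta_{1}$ because $H(z)(0)=(\Theta_{2}\circ h)(z)$, so $x_{2}=h(z)$ witnesses membership, and it is continuous as a corestriction of $(\Theta\circ h,\,H):Z\to X_{1}\times X^{I}$. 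Then I would put $F=\lambda_{\Theta}\circ\psi:Z\to X_{1}^{I}$; condition (1) in the definition of an E-lifting function gives $F(z)(0)=\lambda_{\Theta}(\Theta(h(z)),H(z))(0)=\Theta(h(z))$, i.e. $F_{0}=\Theta\circ h$, and condition (2) gives $\Theta_{1}(F(z)(t))=[\Theta_{1}\circ\lambda_{\Theta}(\Theta(h(z)),H(z))](t)=H(z)(t)$, i.e. $\Theta_{1}\circ F_{t}=H_{t}$ for all $t\in I$. Hence the E-fibration property holds.

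Conversely, assume the E-fibration property. I would feed it the ``universal'' test space
\[
W=\{(x_{2},\alpha)\in X_{2}\times X^{I}\;:\;\Theta_{2}(x_{2})=\alpha(0)\}
\]
with the subspace topology, the projection $h=\pi_{1}|_{W}:W\to X_{2}$, and the projection $H=\pi_{2}|_{W}:W\to X^{I}$, $H(x_{2},\alpha)=\alpha$. By construction $H_{0}=\Theta_{2}\circ h$, so the E-fibration property produces a map $G:W\to X_{1}^{I}$ with $G_{0}=\Theta\circ h$ and $\Theta_{1}\circ G_{t}=H_{t}$ for all $t\in I$. Let $q:W\to\bigtriangleup\Theta_{1}$ be $q(x_{2},\alpha)=(\Theta(x_{2}),\alpha)$, a continuous surjection (surjectivity: every element of $\bigtriangleup\Theta_{1}$ is $(\Theta(x_{2}),\alpha)$ with $\Theta_{2}(x_{2})=\alpha(0)$, i.e. with $(x_{2},\alpha)\in W$). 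The candidate E-lifting function is the map $\lambda_{\Theta}$ determined by $\lambda_{\Theta}\circ q=G$, and conditions (1) and (2) follow at once: $\lambda_{\Theta}(\Theta(x_{2}),\alpha)(0)=G_{0}(x_{2},\alpha)=\Theta(h(x_{2},\alpha))=\Theta(x_{2})$, while $[\Theta_{1}\circ\lambda_{\Theta}(\Theta(x_{2}),\alpha)](t)=(\Theta_{1}\circ G_{t})(x_{2},\alpha)=H_{t}(x_{2},\alpha)=\alpha(t)$.

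The step I expect to be the main obstacle is making sense of ``the map $\lambda_{\Theta}$ determined by $\lambda_{\Theta}\circ q=G$'': a priori $G(x_{2},\alpha)$ might depend on $x_{2}$ itself and not only on $\Theta(x_{2})$, and even if $G$ did factor set-theoretically through $q$ one would still need $q$ to be a quotient map to obtain continuity of $\lambda_{\Theta}$. In the situation where this theorem is actually applied, namely the E-triple $(\Gamma_{U}|X_{U},\Gamma_{U'}|X_{U'},X_{f})$ of Theorem~\ref{E2}, the map $\Theta=f$ is an embedding, so $q$ is the restriction to $W$ of the embedding $f\times\mathrm{id}_{X^{I}}$ and is therefore a homeomorphism of $W$ onto $\bigtriangleup\Theta_{1}$; one may then simply set $\lambda_{\Theta}=G\circ q^{-1}$, which settles both the well-definedness and the continuity. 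I would accordingly carry out the converse under the embedding hypothesis (consistent with the rest of the paper), noting that for a genuinely general E-triple one could instead try to run the argument with $Z=\bigtriangleup\Theta_{1}$ itself, at the price of needing a continuous section of $\Theta$.
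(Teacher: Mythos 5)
Your proposal is correct and follows essentially the paper's strategy: the forward direction is the same (compose the lifting function with $z\mapsto(\Theta(h(z)),H(z))$), and the converse uses the same ``universal'' test object. The one packaging difference is in the converse: the paper takes $Z=\bigtriangleup\Theta_{1}$ itself and sets $h(\Theta(x_{2}),\alpha)=x_{2}$, $H(\Theta(x_{2}),\alpha)=\alpha$, then simply declares the resulting lift $F$ to be $\lambda_{\Theta}$; you instead take the graph-like space $W\subset X_{2}\times X^{I}$ with its two projections and factor the lift $G$ through $q(x_{2},\alpha)=(\Theta(x_{2}),\alpha)$. These are the same move in two guises -- the paper's $h$ is precisely ``first coordinate followed by an inverse of $\Theta$'' -- so the well-definedness and continuity issue you flag (recovering $x_{2}$ from $\Theta(x_{2})$ when $\Theta$ is not injective) is equally present in the paper's proof, where it is passed over silently. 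Your remark that the problem disappears when $\Theta$ is an embedding (the case $\Theta=f$ relevant to Theorem~\ref{E2}, where $q$ is a homeomorphism onto $\bigtriangleup\Theta_{1}$, so $\lambda_{\Theta}=G\circ q^{-1}$) is the honest way to close that gap, and is the only respect in which your argument differs from -- and is more careful than -- the paper's.
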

\begin{proof}
 Let the E-triple $(\Theta_{2}|X_{2}, \Theta_{1}|X_{1}, X_{\Theta})$ has  E-lifting function $\lambda_{\Theta}$ and  $Z$ be any space. Let $h:Z\to X_{2}$
 be any map and
   $H:Z\to X^{I}$ be any  map  such that $\Theta_{2}\circ
   h=H_0$. For $z \in Z$, consider a path $\alpha_z :I\to X$ defined by $\alpha_z(t)=H(z,t)$ for all $t\in I$.
   Define a  map $F:Z  \to (X_{1})^{I}$ by
   \[F(z)(t)=\lambda_{\Theta}[( \Theta \circ h)(z), \alpha_z](t)\]
   for all $z\in Z, t\in
   I$.
    Then, we observe that
    \[
  F(z)(0) =\lambda_{\Theta}[( \Theta \circ h)(z), \alpha_z](0) =( \Theta \circ h)(z)\]
 and
    \begin{eqnarray*}
  (\Theta_{1}\circ F)(z)(t) &=& \Theta_{1}\{\lambda_{\Theta}[( \Theta \circ h)(z), \alpha_z](t)\} \\
   &=& \alpha_z(t)=H(z)(t)
\end{eqnarray*}
    for all $z\in Z, t\in
   I$.  Thus,  $(\Theta_{2}|X_{2}, \Theta_{1}|X_{1}, X_{\Theta})$ has  E-fibration
property.

   Conversely, let   $(\Theta_{2}|X_{2}, \Theta_{1}|X_{1}, X_{\Theta})$ has  E-fibration
property.      Let  $h:\bigtriangleup \Theta_{1}\to X_{2}$  be a
map and
   $H:\bigtriangleup \Theta_{1} \to X^{I}$ be a  map defined
   by $h( \Theta(x_{2}),\alpha)=x_{2} $ for all $( \Theta(x_{2}),\alpha)\in \bigtriangleup \Theta_{1}$ and
   $H( \Theta(x_{2}),\alpha)(t)=\alpha(t)$ for all $t\in I$ and $( \Theta(x_{2}),\alpha)\in \bigtriangleup
   \Theta_{1}$.
    Note that
     \begin{eqnarray*}
 H(\Theta(x_{2}),\alpha)(0)&=& \alpha(0) = \Theta_{2}(x_{2})\\
   &=& (\Theta_{2} \circ h)( \Theta(x_{2}),\alpha)
\end{eqnarray*}
  for all  $( \Theta(x_{2}),\alpha)\in \bigtriangleup
   \Theta_{1}$.  Thus,  $
   H_0=\Theta_{2} \circ h$. Since  $(\Theta_{2}|X_{2}, \Theta_{1}|X_{1}, X_{\Theta})$ has  E-fibration
property,
   then there exists a  map $F:\bigtriangleup \Theta_{1}\to X_{1}^{I}$
   such that $\Theta_{1}\circ F_{t}=H_{t}$ and $ \Theta\circ h=F_0$ for all $t\in I$. Now define a
   map $\lambda_{\Theta} : \bigtriangleup \Theta_{1} \to X_{1}^I$ by \[\lambda_{\Theta} [
    \Theta(x_{2}),\alpha](t)= F(  \Theta(x_{2}),\alpha)(t)\] for all $t\in I$ and $( \Theta(x_{2}),\alpha)\in \bigtriangleup
    \Theta_{1}$.
    Note that
 \begin{eqnarray*}
\lambda_{\Theta}[
    \Theta(x_{2}),\alpha](0)&=&F(  \Theta(x_{2}),\alpha)(0) \\
   &=& ( \Theta\circ h)( \Theta(x_{2}),\alpha)=  \Theta(x_{2})
\end{eqnarray*}
for all $( \Theta(x_{2}),\alpha)\in \bigtriangleup
   \Theta_{1}$ and
 \begin{eqnarray*}
\Theta_{1} \circ \lambda_{\Theta} [  \Theta(x_{2}),\alpha](t)&=&(\Theta_{1}\circ F)[  \Theta(x_{2}),\alpha](t) \\
   &=& H ( \Theta(x_{2}),\alpha)(t)=\alpha(t)
   \end{eqnarray*}
   for all $( \Theta(x_{2}),\alpha)\in \bigtriangleup
   \Theta_{1}$ and $t\in I$. Hence, $ \lambda_{\Theta} $ is an E-lifting function of
    $(\Theta_{2}|X_{2}, \Theta_{1}|X_{1}, X_{\Theta})$.
   \end{proof}
\noindent We say that an E-triple  $(\Theta_{2}|X_{2},
\Theta_{1}|X_{1}, X_{\Theta})$ has  an E-\emph{regular fibration
property}
   if it has an \emph{E-regular lifting function} $\lambda_{\Theta} $, i.e. $\lambda_{\Theta} [  \Theta(x_{2}),\Theta_{1}\circ \widetilde{
\Theta(x_{2})} ] =\widetilde{ \Theta(x_{2})}$ for all $
\Theta(x_{2})\in
    X_{1}$.\\


   \noindent We say that an embedding  map  $f: X_{U} \to  X_{U'}$ is  an  E-\emph{ E-regular fibration embedding} if its E-triple
$(\Gamma_{U}|X_{U}, \Gamma_{U'}|X_{U'}, X_{f})$ has  E-regular lifting function.  \\

\begin{exa}\label{EL3}
\emph{In Example \ref{E3}, we can define the  map
$\lambda_{f\times id_{X}} : \bigtriangleup \pi_{2} \to
(X_{U'}\times X)^I$ by
\[\lambda_{f\times id_{X}}[(f\times id_{X})(r,x),\alpha](t)= (f(r),\alpha(t))\]
for all $t\in I, ( (f\times id_{X})(r,x),\alpha)\in \bigtriangleup
\pi_{2}$.  Note that  for $( (f\times id_{X})(r,x),\alpha)\in
\bigtriangleup \pi_{2}$ and $t\in I$, \[
  \lambda_{f\times id_{X}}[ (f\times id_{X})(r,x),\alpha](0) = (f(r),x)= (f\times
  id_{X})(r,x)\] and
\[
  \pi_{2}\circ \lambda_{f\times id_{X}}[ (f\times id_{X})(r,x),\alpha] (t) =\pi_{2}(f(r),\alpha(t)) =
  \alpha(t).\]
     Hence, $\lambda_{f\times id_{X}}$ is an E- lifting
     function of
E-triple $(\pi_{2}|X_{U}\times X, \pi_{2}|X_{U'}\times X,
X_{f\times id_{X}})$. Also, we observe that for $f\times
id_{X}(r,x)\in X_{U'}\times X$,
     \begin{eqnarray*}
  \lambda_{f\times id_{X}}[ (f\times
id_{X})(r,x),\pi_{2}\circ \widetilde{ f\times id_{X}(r,x)}](t) &=&
[f(r), (\pi_{2}\circ \widetilde{ f\times
id_{X}(r,x)})(t)] \\
   &=& (f(r), x) =(f\times id_{X})(r,x)\\&=& \widetilde{ f\times id_{X}(r,x)}(t).
\end{eqnarray*}
Hence, $ \lambda_{f\times id_{X}}$ is an E-regular lifting
function.}
\end{exa}

\section{Preserving projection property }
In this section, we show preserving projection property for E-lifting function.
\begin{thm}\label{PPP1}
\emph{Let   $f: X_{U} \to  X_{U'}$ be any   E-regular fibration
embedding of orbifold chart $(X_{U}, G_{U}, \Gamma_{U})$ into
orbifold chart $(X_{U'}, G_{U'}, \Gamma_{U'})$. Let  $\lambda':[
f(X_{U})]^I\to X_{U}^I$ be a map defined by
$\lambda'(\beta)=\lambda_{f}(\beta(0),\Gamma_{U'}\circ \beta)$ for
all $\beta\in [ f(X_{U})]^I$. If $ f$ is an injective, then
$\lambda'\simeq inclusion: [ f(X_{U})]^I\subset X_{U'}^{I} $
\emph{preserving projection}. This means that there exists a
 map $H:[ f(X_{U})]^I \times I\to X_{U'}^{I}$ between two maps
$\lambda'$ and $inclusion: [ f(X_{U})]^I\subset X_{U'}^{I}$ such
that
\[[\Gamma_{U'}\circ H(\beta,s)](t)= \Gamma_{U'}(\beta(t))\] for all $\beta \in [ f(X_{U})]^I, s,t\in
I$. }
\end{thm}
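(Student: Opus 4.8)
The plan is to construct $H$ by a "front–lift, back–slide" homotopy: at parameter $s\in I$ the path $H(\beta,s)$ agrees with $\beta$ on $[0,s]$ and on $[s,1]$ follows the $\lambda_{f}$–lift of $\Gamma_{U'}\circ\beta$ issued from $\beta(s)$, so that $s=1$ returns $\beta$ and $s=0$ returns $\lambda'$. To set up, since $f$ is an E‑regular fibration embedding its E‑triple $(\Gamma_{U}|X_{U},\Gamma_{U'}|X_{U'},X_{f})$ carries an E‑regular lifting function $\lambda_{f}:\bigtriangleup\Gamma_{U'}\to X_{U'}^{I}$, where $\bigtriangleup\Gamma_{U'}=\{(f(x_{2}),\alpha)\in X_{U'}\times X^{I}:\Gamma_{U}(x_{2})=\alpha(0)\}$, satisfying $\lambda_{f}(f(x_{2}),\alpha)(0)=f(x_{2})$, $\Gamma_{U'}\!\circ\!\lambda_{f}(f(x_{2}),\alpha)=\alpha$, and $\lambda_{f}(f(x_{2}),\Gamma_{U'}\circ\widetilde{f(x_{2})})=\widetilde{f(x_{2})}$. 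For $\beta\in[f(X_{U})]^{I}$ we have $\beta(0)\in f(X_{U})$ and $\Gamma_{U'}(\beta(0))=(\Gamma_{U'}\circ\beta)(0)$, so $(\beta(0),\Gamma_{U'}\circ\beta)\in\bigtriangleup\Gamma_{U'}$ and $\lambda'(\beta)=\lambda_{f}(\beta(0),\Gamma_{U'}\circ\beta)\in X_{U'}^{I}$ is well defined, with $\lambda'(\beta)(0)=\beta(0)$ and $\Gamma_{U'}\circ\lambda'(\beta)=\Gamma_{U'}\circ\beta$.

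For the construction, for $(\beta,s)\in[f(X_{U})]^{I}\times I$ put $\alpha_{\beta,s}\in X^{I}$, $\alpha_{\beta,s}(u)=\Gamma_{U'}\bigl(\beta(s+(1-s)u)\bigr)$; note $s+(1-s)u\in[s,1]\subseteq I$, and that for $s=1$ the path $\alpha_{\beta,1}$ is the constant path at $\Gamma_{U'}(\beta(1))$, i.e. $\alpha_{\beta,1}=\Gamma_{U'}\circ\widetilde{\beta(1)}$. Then define $H:[f(X_{U})]^{I}\times I\to X_{U'}^{I}$ by
\[
H(\beta,s)(t)=
\begin{cases}
\beta(t), & 0\le t\le s,\\[2pt]
\lambda_{f}\bigl(\beta(s),\alpha_{\beta,s}\bigr)\!\left(\dfrac{t-s}{1-s}\right), & s\le t\le 1,\ s<1,
\end{cases}
\qquad H(\beta,1)=\beta .
\]
Since $\lambda_{f}(\beta(s),\alpha_{\beta,s})(0)=\beta(s)$, the two clauses agree at $t=s$, so each $H(\beta,s)$ is a genuine element of $X_{U'}^{I}$. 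The remaining identities are routine consequences of the displayed properties of $\lambda_{f}$: $H(\beta,0)=\lambda_{f}(\beta(0),\alpha_{\beta,0})=\lambda_{f}(\beta(0),\Gamma_{U'}\circ\beta)=\lambda'(\beta)$; $H(\beta,1)$ is the inclusion $[f(X_{U})]^{I}\hookrightarrow X_{U'}^{I}$; and on both clauses $\Gamma_{U'}(H(\beta,s)(t))=\Gamma_{U'}(\beta(t))$, where for $t\ge s$ one uses $\Gamma_{U'}\circ\lambda_{f}(\beta(s),\alpha_{\beta,s})=\alpha_{\beta,s}$ together with $s+(1-s)\frac{t-s}{1-s}=t$. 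Hence, granting continuity, $H$ is the required homotopy preserving projection.

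The one substantive point is continuity of $H$, which I would check after passing to the adjoint $\widehat H:[f(X_{U})]^{I}\times I\times I\to X_{U'}$, $\widehat H(\beta,s,t)=H(\beta,s)(t)$ (legitimate since $I$ is locally compact Hausdorff). The assignment $\Psi(\beta,s)=(\beta(s),\alpha_{\beta,s})$ is continuous into $\bigtriangleup\Gamma_{U'}$ — evaluation $[f(X_{U})]^{I}\times I\to f(X_{U})$ is continuous because $I$ is locally compact, and $(\beta,s,u)\mapsto\Gamma_{U'}(\beta(s+(1-s)u))$ is a composite of continuous maps — so $\lambda_{f}\circ\Psi:[f(X_{U})]^{I}\times I\to X_{U'}^{I}$ is continuous. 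On the closed set $\{t\le s\}$ one has $\widehat H(\beta,s,t)=\beta(t)$, continuous; on $\{t\ge s,\ s<1\}$ one has $\widehat H(\beta,s,t)=\bigl((\lambda_{f}\circ\Psi)(\beta,s)\bigr)\!\bigl(\frac{t-s}{1-s}\bigr)$, continuous; and these agree on the seam $\{t=s\}$, so the pasting lemma gives continuity wherever $s<1$. The main obstacle is continuity across $s=1$, where the reparametrisation $\frac{t-s}{1-s}$ is itself discontinuous; this is exactly where E‑regularity is needed. Since $\beta(1)\in f(X_{U})$ and $\alpha_{\beta,1}=\Gamma_{U'}\circ\widetilde{\beta(1)}$, the E‑regularity of $\lambda_{f}$ gives $(\lambda_{f}\circ\Psi)(\beta,1)=\lambda_{f}(\beta(1),\Gamma_{U'}\circ\widetilde{\beta(1)})=\widetilde{\beta(1)}$, a constant path; hence, by continuity of $\lambda_{f}\circ\Psi$, as $(\beta,s)\to(\beta_{0},1)$ the paths $(\lambda_{f}\circ\Psi)(\beta,s)$ converge uniformly on $I$ to the constant path $\widetilde{\beta_{0}(1)}$, which forces $\widehat H(\beta,s,t)\to\beta_{0}(1)$ irrespective of how $\frac{t-s}{1-s}\in I$ behaves, matching the value $\widehat H(\beta_{0},1,t)$ coming from the other clause (there $\beta(t)\to\beta_{0}(1)$ whenever $t\le s\to1$). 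Thus $\widehat H$ is continuous at every point, so $H$ is continuous, and $H$ witnesses $\lambda'\simeq\mathrm{inclusion}$ preserving projection, as claimed.
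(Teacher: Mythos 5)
Your construction is essentially the paper's own: both homotopies follow $\beta$ on $[0,s]$ and on $[s,1]$ apply the E-lifting function $\lambda_{f}$ to $\beta(s)$ paired with a reparametrized copy of $\Gamma_{U'}\circ\beta$, the only difference being that you rescale the tail by $(t-s)/(1-s)$ (hence need the separate clause at $s=1$ and the E-regularity argument for continuity there), while the paper shifts by $t-s$ and pads the projected path with the constant value $\alpha(1)$. Your verification is correct, and indeed more careful than the paper's, which asserts the continuity of $H$ without proof.
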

\begin{proof}
 Since $ f$ is an injective, then for $\beta
\in [ f(X_{U})]^I$ and $s\in I$, there exists exactly one $r$ such
that $ f(r)=\beta(s)$. Hence, $\lambda'$ is well-defined and the
maps in this proof  will be well-defined. For $\beta \in [
f(X_{U})]^I$ and $s\in I$, we can define a path $\beta_{s} \in [
f(X_{U})]^I$ by
\[ \beta_{s}(t)= \left\{
  \begin{array}{c l}
    \beta(t) & \mbox{for} \quad 0\leq t\leq s,\\
    \beta(s)  & \mbox{for}\quad s\leq t \leq 1.
  \end{array}
\right. \] For $\alpha = \Gamma_{U'}\circ \beta$ and $s\in I$, we
can define the path $\alpha^{1-s}\in X^I$ by
\[ \alpha^{1-s}(t)= \left\{
  \begin{array}{c l}
    \alpha(s+t) & \mbox{for} \quad 0\leq t\leq 1-s,\\
    \alpha(1)  & \mbox{for}\quad 1-s\leq t \leq 1.
  \end{array}
\right. \] Define a  map $H:[ f(X_{U})]^I \times I\to
  X_{U'}^{I}$ by
\[  H(\beta,s) (t)= \left\{
  \begin{array}{c l}
    \beta_{s}(t) & \mbox{for} \quad 0\leq t\leq s,\\
    \lambda_{f}(\beta(s),\alpha^{1-s})(t-s)  & \mbox{for}\quad s\leq t \leq 1,
  \end{array}
\right. \] for all $s\in I, \beta\in [ f(X_{U})]^I$. By the
E-regularity of $f$, we observe that
\begin{eqnarray*}
 H(\beta,0)(t) &=& \lambda_{f}(\beta(0),\alpha^{1})(t) =\lambda_{f}(\beta(0),\alpha)(t)\\
   &=&\lambda_{f}(\beta(0),\Gamma_{U'}\circ\beta)(t)= \lambda'(\beta)(t),
\end{eqnarray*}
and $H(\beta,1) (t)=\beta(t) $ for $\beta\in [ f(X_{U})]^I, t\in
I$. Hence, 
\[H(\beta,0)=\lambda'(\beta)\quad\mbox{and}\quad H(\beta ,
1)=\beta\] for all $\beta\in [ f(X_{U})]^I$. Thus,  $\lambda'
\simeq inclusion :[ f(X_{U})]^I\subset X_{U'}^{I} $. Also,
\begin{eqnarray*}
    [\Gamma_{U'}\circ H(\beta , s)](t) &=&\left\{
  \begin{array}{c l}
   \Gamma_{U'}( \beta_{s}(t)) & \mbox{for} \quad 0\leq t\leq s,\\
    \Gamma_{U'}[\lambda_{f}(\beta(s),\alpha^{1-s})(t-s)]  & \mbox{for}\quad s\leq t \leq
    1,
  \end{array}
\right. \\
&=& \left\{
  \begin{array}{c l}
    \Gamma_{U'}( \beta(t)) & \mbox{for} \quad 0\leq t\leq s,\\
    \alpha^{1-s}(t-s)  & \mbox{for}\quad 0\leq t-s \leq 1-s,
  \end{array}
\right.\\
 &=& \left\{
  \begin{array}{c l}
    \Gamma_{U'}( \beta(t)) & \mbox{for} \quad 0\leq t\leq s,\\
    \alpha(s+t-s)  & \mbox{for}\quad 0\leq t-s \leq 1-s,
  \end{array}
\right.\\
 &=& \left\{
  \begin{array}{c l}
    \Gamma_{U'}( \beta(t)) & \mbox{for} \quad 0\leq t\leq s,\\
    \alpha(t)  & \mbox{for}\quad s\leq t \leq 1,
  \end{array}
\right. \\
&=& \left\{
  \begin{array}{c l}
    \Gamma_{U'}( \beta(t)) & \mbox{for} \quad 0\leq t\leq s,\\
    \Gamma_{U'}(\beta(t))  & \mbox{for}\quad s\leq t \leq 1,
  \end{array}
\right. \\
&=& \Gamma_{U'}(\beta(t)).
\end{eqnarray*}
Hence, $\lambda'\simeq inclusion :[ f(X_{U})]^I\subset X_{U'}^{I}$
preserves  projection.
\end{proof}
\begin{cor}\label{PPP2}
\emph{Let   $f: X_{U} \to  X_{U'}$ be any     E-regular fibration
embedding of orbifold chart $(X_{U}, G_{U}, \Gamma_{U})$ into
orbifold chart $(X_{U'}, G_{U'}, \Gamma_{U'})$. Let
$\lambda':X_{U'}^{I}\to X_{U'}^{I}$ be a map defined by
\[\lambda'(\beta)=\lambda_{f}(\beta(0),\Gamma_{U'}\circ \beta)\] for all  $ \beta\in
X_{U'}^{I}$.  If $ f$ is a homeomorphism, then $\lambda'\simeq
id_{X_{U'}^{I}} $ preserves  projection.}
\end{cor}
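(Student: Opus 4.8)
The plan is to reduce the statement directly to Theorem \ref{PPP1}. Since $f: X_{U}\to X_{U'}$ is a homeomorphism, it is in particular an injective map and it is onto $X_{U'}$; hence $f(X_{U})=X_{U'}$ and therefore $[f(X_{U})]^{I}=X_{U'}^{I}$ as topological spaces (the compact-open topology is determined by the target space, so no subtlety enters when passing to path spaces). Under this identification the map $\lambda'(\beta)=\lambda_{f}(\beta(0),\Gamma_{U'}\circ\beta)$ of the present corollary is literally the map $\lambda'$ appearing in Theorem \ref{PPP1}, and the inclusion $[f(X_{U})]^{I}\subset X_{U'}^{I}$ becomes the identity map $id_{X_{U'}^{I}}$.

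First I would check that the hypotheses of Theorem \ref{PPP1} are satisfied: $f$ is an E-regular fibration embedding by assumption, and $f$ is injective because it is a homeomorphism. Applying Theorem \ref{PPP1} then yields a map $H:X_{U'}^{I}\times I\to X_{U'}^{I}$ with $H(\beta,0)=\lambda'(\beta)$ and $H(\beta,1)=\beta$ for all $\beta\in X_{U'}^{I}$, and such that $[\Gamma_{U'}\circ H(\beta,s)](t)=\Gamma_{U'}(\beta(t))$ for all $\beta\in X_{U'}^{I}$ and $s,t\in I$.

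Then I would simply observe that $H(\beta,1)=\beta=id_{X_{U'}^{I}}(\beta)$, so $H$ is a homotopy from $\lambda'$ to $id_{X_{U'}^{I}}$, and that the relation $[\Gamma_{U'}\circ H(\beta,s)](t)=\Gamma_{U'}(\beta(t))$ is exactly the assertion that this homotopy preserves projection. Hence $\lambda'\simeq id_{X_{U'}^{I}}$ preserving projection, which completes the argument.

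The only point that deserves a word of care is the identification $[f(X_{U})]^{I}=X_{U'}^{I}$: one must read ``$f$ is a homeomorphism'' as ``$f$ is a homeomorphism of $X_{U}$ onto $X_{U'}$'', so that $f$ is surjective; granting this, the equality of path spaces is immediate, and no genuine obstacle remains. In effect this corollary is the specialization of Theorem \ref{PPP1} in which the inclusion degenerates to the identity.
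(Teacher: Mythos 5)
Your reduction is correct and is essentially the paper's own argument: the paper likewise notes that surjectivity of the homeomorphism $f$ gives $[f(X_{U})]^{I}=X_{U'}^{I}$ and then invokes Theorem \ref{PPP1}, with the inclusion degenerating to the identity. Your write-up simply spells out the details the paper leaves to the reader.
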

 \begin{proof}
   Since
 $ f$ is surjective, thus we can put $ [ f(X_{U})]^I = X_{U'}^{I}$. Hence, the proof can be obtained easily.
 \end{proof}


\begin{cor}\label{PPP3}
\emph{In the Theorem \ref{PPP1}, let $ f$ be a homeomorphism and
for a path $\beta\in X_{U'}^{I}$, let $g_{\beta}$ be a path in
$X_{U'}^{I}$ defined by $g_{\beta}(s)= H(\beta,s) (1)$ for all
 $s\in I$. Let $N:X_{U'}^{I}\to X_{U'}^{I}$ be a map
defined by
$N(\beta)=\lambda_{f}(\beta(0),\Gamma_{U'}\circ\beta)\star
g_{\beta} $  for all $\beta \in X_{U'}^{I}$. Then, $N\simeq
id_{X_{U'}^{I}}$ keeping end points fixed.}
\end{cor}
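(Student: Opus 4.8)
The plan is to build the required homotopy out of the homotopy $H\colon X_{U'}^{I}\times I\to X_{U'}^{I}$ constructed in Theorem \ref{PPP1}, which by Corollary \ref{PPP2} is available on all of $X_{U'}^{I}$ since $f$ is a homeomorphism. Fix $\beta\in X_{U'}^{I}$ and regard $H(\beta,\cdot)(\cdot)$ as a map of the square, $F\colon I\times I\to X_{U'}$, $F(t,v)=H(\beta,v)(t)$. From the explicit formula for $H$ in Theorem \ref{PPP1} one reads off its four edges: $F(t,0)=\lambda_{f}(\beta(0),\Gamma_{U'}\circ\beta)(t)$ (bottom), $F(t,1)=\beta(t)$ (top), $F(0,v)=\beta_{v}(0)=\beta(0)$, a \emph{constant} path (left), and $F(1,v)=H(\beta,v)(1)=g_{\beta}(v)$ (right). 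In particular $g_{\beta}$ is nothing but the trace of the moving right endpoint of $H(\beta,\cdot)$, and by definition $N(\beta)=\lambda_{f}(\beta(0),\Gamma_{U'}\circ\beta)\star g_{\beta}$ is the path ``bottom edge followed by right edge'' of $F$.

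First I would check well-definedness and endpoint matching. Since $f$ is surjective, $\beta(0)=f(x_{U})$ for some $x_{U}$, and using $\Gamma_{U'}\circ f=\Gamma_{U}$ the pair $(\beta(0),\Gamma_{U'}\circ\beta)$ lies in $\bigtriangleup\Gamma_{U'}$, so $\lambda_{f}(\beta(0),\Gamma_{U'}\circ\beta)$ makes sense; by property (1) of the E-lifting function $\lambda_{f}$ its value at $0$ is $\beta(0)$, and its value at $1$ is $H(\beta,0)(1)=g_{\beta}(0)$, so the concatenation defining $N(\beta)$ is legitimate. Reading endpoints, $N(\beta)(0)=\beta(0)$ and $N(\beta)(1)=g_{\beta}(1)=H(\beta,1)(1)=\beta(1)$, so $N(\beta)$ and $\beta$ have the same endpoints --- exactly the compatibility a homotopy ``keeping end points fixed'' demands.

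Next I would exhibit the homotopy by sliding, inside the convex square $I\times I$, the polygonal path ``bottom then right'' onto the path ``top''. Let $\delta_{0}(u)=(2u,0)$ for $u\le\tfrac{1}{2}$ and $\delta_{0}(u)=(1,2u-1)$ for $u\ge\tfrac{1}{2}$, let $\delta_{1}(u)=(u,1)$, and set $\delta_{s}=(1-s)\delta_{0}+s\delta_{1}$. Define $K\colon X_{U'}^{I}\times I\to X_{U'}^{I}$ by $K(\beta,s)=F\circ\delta_{s}$, that is $K(\beta,s)(u)=H\bigl(\beta,\pi_{2}(\delta_{s}(u))\bigr)\bigl(\pi_{1}(\delta_{s}(u))\bigr)$ with $F$ as above. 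Then $K(\beta,0)=F\circ\delta_{0}=\lambda_{f}(\beta(0),\Gamma_{U'}\circ\beta)\star g_{\beta}=N(\beta)$ and $K(\beta,1)=F\circ\delta_{1}=\beta$; moreover $\delta_{s}(0)=(0,s)$ lies on the left edge and $\delta_{s}(1)=(1,1)$, whence $K(\beta,s)(0)=F(0,s)=\beta(0)$ and $K(\beta,s)(1)=F(1,1)=\beta(1)$ for every $s$. Continuity of $K$ follows from continuity of $H$ together with local compactness of $I$ (so that $(\beta,v,t)\mapsto H(\beta,v)(t)$ is continuous) and continuity of $(s,u)\mapsto\delta_{s}(u)$. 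Hence $K$ is a homotopy from $N$ to $id_{X_{U'}^{I}}$ keeping end points fixed.

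The one point I would treat with care, rather than as bookkeeping, is the identification of the left edge of $F$ as the constant path at $\beta(0)$: this is precisely what lets $\delta_{s}(0)$ slide up the left edge without disturbing the initial point of $K(\beta,s)$, and it is the structural reason the correction term $g_{\beta}$ has to be pre-concatenated in $N$ --- the right edge of $F$ genuinely moves, so $\lambda_{f}(\beta(0),\Gamma_{U'}\circ\beta)$ alone is not homotopic to $\beta$ rel endpoints. Everything else amounts to substituting the formula for $H$ from Theorem \ref{PPP1} and checking the agreements at $u=\tfrac{1}{2}$ and at $s=0,1$.
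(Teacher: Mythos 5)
Your argument is correct, and at bottom it is the same construction the paper uses: the paper's proof also recycles the homotopy $H$ of Theorem \ref{PPP1} on all of $X_{U'}^{I}$ and exploits precisely the two facts you isolate, namely that $H(\beta,s)(0)=\beta(0)$ for every $s$ (the constant left edge of your square $F$) and that $H(\beta,s)(1)=g_{\beta}(s)$ (the moving right edge). Concretely, the paper takes $G(\beta,s)=H(\beta,s)\star(g_{\beta})_{s}$ with $(g_{\beta})_{s}(t)=g_{\beta}(s+(1-s)t)$, i.e.\ the $s$-slice of $H$ followed by the remaining tail of $g_{\beta}$; in your square picture this is just a different sweeping family --- the L-shaped path ``horizontal at height $s$, then up the right edge from $s$ to $1$'' --- in place of your straight-line interpolation $\delta_{s}=(1-s)\delta_{0}+s\delta_{1}$. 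The one substantive difference is at $s=1$: your sweep ends exactly at $\beta$, whereas the paper's ends at $\beta\star\widetilde{\beta(1)}$ and tacitly appeals to the standard reparametrization homotopy $\beta\star\widetilde{\beta(1)}\simeq\beta$ rel endpoints (written there, loosely, as ``$\simeq id_{X_{U'}^{I}}$''), so your version is slightly cleaner on that point. Your preliminary checks --- that $(\beta(0),\Gamma_{U'}\circ\beta)\in\bigtriangleup\Gamma_{U'}$ using surjectivity of $f$ and $\Gamma_{U'}\circ f=\Gamma_{U}$, that $g_{\beta}(0)$ is the terminal point of $\lambda_{f}(\beta(0),\Gamma_{U'}\circ\beta)$ so the concatenation defining $N$ makes sense, and the exponential-law argument for continuity of $K$ --- are all sound and, if anything, more careful than what the paper records.
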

\begin{proof}
For a path $\beta$ in $X_{U'}^{I}$ and $s\in I$, we can define a
path $\beta_{s}$ in $X_{U'}^{I}$ by
 \[\beta_{s}(t)=\beta(s+(1-s)t) \] for all $ t\in I$.
Define the   map $G:X_{U'}^{I}\times I\to
 X_{U'}^{I}$ by
\[G(\beta , s)=H(\beta , s) \circ (g_{\beta})_{s}\]for all $ s\in I,\beta\in X_{U'}^{I}$. Hence, we observe that
\begin{eqnarray*}
 G(\beta ,0) &=& H(\beta ,0) \circ (g_{\beta})_{0} =\lambda_{f}(\beta(0),\Gamma_{U'} \circ \beta) \circ g_{\beta} \\
   &=& N(\beta)
\end{eqnarray*}
and
\[
 G(\beta ,1) = H(\beta ,1) \circ [g_{\beta}]_{1} =\beta \circ \widetilde{\beta(1)}  \simeq
 id_{X_{U'}^{I}}\]
for all $\beta\in X_{U'}^{I}$. Hence, $N\simeq  id_{X_{U'}^{I}}$.
Also we observe that
\begin{eqnarray*}
  G(\beta , s) (0) &=&  [H(\beta , s)\circ(g_{\beta})_{s}] (0) \\
   &=& H(\beta , s)(0)= \beta_{s}(0)= \beta(0),
\end{eqnarray*}
and
\[
  G(\beta ,s) (1) = [H(\beta ,0)\circ(g_{\beta})_{s}](1) =(g_{\beta})_{s}(1)=
  \beta(1).\] for all $\beta\in X_{U'}^{I}$ and $s\in I$.
Hence, $N\simeq  id_{X_{U'}^{I}}$ keeping end points fixed.
\end{proof}

\begin{cor}\label{PPP4}
\emph{Let   $f: X_{U} \to  X_{U'}$ be an    E-regular fibration
embedding of orbifold chart $(X_{U}, G_{U}, \Gamma_{U})$ into
orbifold chart $(X_{U'}, G_{U'}, \Gamma_{U'})$. If   $ f$ is  a
homeomorphism, then  $X_{U'}^{I}$ and $\bigtriangleup \Gamma_{U'}$
are of the same  map type.}
\end{cor}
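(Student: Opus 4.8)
The plan is to exhibit an explicit homotopy equivalence between $X_{U'}^{I}$ and $\bigtriangleup\Gamma_{U'}$, using the E-regular lifting function $\lambda_{f}$ of the E-triple $(\Gamma_{U}|X_{U},\Gamma_{U'}|X_{U'},X_{f})$ (which exists because $f$ is an E-regular fibration embedding) together with Corollary \ref{PPP2}. The first step is to unwind the definition of $\bigtriangleup\Gamma_{U'}$ in the present situation: by Definition \ref{E19} applied to this E-triple,
\[\bigtriangleup\Gamma_{U'}=\{(f(r),\alpha)\in X_{U'}\times X^{I}:\Gamma_{U}(r)=\alpha(0)\ \text{for some }r\in X_{U}\}.\]
Since $f$ is a homeomorphism it is in particular surjective, so every point of $X_{U'}$ has the form $f(r)$, and since $\Gamma_{U'}\circ f=\Gamma_{U}$ the condition $\Gamma_{U}(r)=\alpha(0)$ becomes $\Gamma_{U'}(f(r))=\alpha(0)$. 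Hence
\[\bigtriangleup\Gamma_{U'}=\{(x',\alpha)\in X_{U'}\times X^{I}:\Gamma_{U'}(x')=\alpha(0)\}.\]

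Next I would define two maps. Let $\Phi:X_{U'}^{I}\to\bigtriangleup\Gamma_{U'}$ be given by $\Phi(\beta)=(\beta(0),\Gamma_{U'}\circ\beta)$; this lands in $\bigtriangleup\Gamma_{U'}$ because $(\Gamma_{U'}\circ\beta)(0)=\Gamma_{U'}(\beta(0))$, and it is continuous since evaluation at $0$ and post-composition with $\Gamma_{U'}$ are continuous for the compact-open topology. Let $\Psi:\bigtriangleup\Gamma_{U'}\to X_{U'}^{I}$ be the E-regular lifting function itself, $\Psi=\lambda_{f}$, which is a map by Definition \ref{E19}. I would then check that $\Phi\circ\Psi=id_{\bigtriangleup\Gamma_{U'}}$ on the nose: for $(x',\alpha)\in\bigtriangleup\Gamma_{U'}$, writing $x'=f(r)$, property (1) of an E-lifting function gives $\lambda_{f}(x',\alpha)(0)=f(r)=x'$, and property (2) gives $\Gamma_{U'}(\lambda_{f}(x',\alpha)(t))=\alpha(t)$ for all $t\in I$, so $\Phi(\Psi(x',\alpha))=(x',\Gamma_{U'}\circ\lambda_{f}(x',\alpha))=(x',\alpha)$.

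Finally I would identify the other composite: $\Psi\circ\Phi(\beta)=\lambda_{f}(\beta(0),\Gamma_{U'}\circ\beta)$, which is exactly the map $\lambda':X_{U'}^{I}\to X_{U'}^{I}$ of Corollary \ref{PPP2} (here $[f(X_{U})]^{I}=X_{U'}^{I}$ since $f$ is surjective). By Corollary \ref{PPP2}, $\lambda'\simeq id_{X_{U'}^{I}}$, hence $\Psi\circ\Phi\simeq id_{X_{U'}^{I}}$. Combining this with $\Phi\circ\Psi=id_{\bigtriangleup\Gamma_{U'}}$ shows that $\Phi$ and $\Psi$ are mutually inverse up to homotopy, so $X_{U'}^{I}$ and $\bigtriangleup\Gamma_{U'}$ have the same homotopy (map) type. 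The only points needing care are the routine verification that $\Phi$ is well-defined and continuous and the bookkeeping that $\Psi\circ\Phi$ coincides with the map $\lambda'$ of Corollary \ref{PPP2}; the homotopy itself is supplied entirely by that corollary, so there is no substantial obstacle beyond correctly matching the definitions.
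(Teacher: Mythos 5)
Your proposal is correct and follows essentially the same route as the paper: the paper's map $K(\beta)=(\beta(0),\Gamma_{U'}\circ\beta)$ is your $\Phi$, the exact identity $K\circ\lambda_{f}=id_{\bigtriangleup\Gamma_{U'}}$ is verified the same way, and the other composite is recognized as $\lambda'$ and handled by the earlier homotopy $\lambda'\simeq id_{X_{U'}^{I}}$. The only difference is bookkeeping: you cite Corollary \ref{PPP2} for that homotopy (arguably the more apt reference), while the paper invokes Corollary \ref{PPP3}.
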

 \begin{proof}
  We show that the E-lifting function $\lambda_{f}:
 \bigtriangleup \Gamma_{U'} \to X_{U'}^{I} $ is a
   map equivalence.
  We can define the  map $K:X_{U'}^{I}\to \bigtriangleup \Gamma_{U'}$
 by
$K(\beta)=(\beta(0),\Gamma_{U'}\circ \beta) $ for all $\beta\in
X_{U'}^{I}$.  Then, we observe that from Corollary \ref{PPP3}, $
\lambda' = \lambda_{f}\circ K \simeq  id_{X_{U'}^{I}}$. Also, we
observe that  for $( f(r),\alpha)\in \bigtriangleup \Gamma_{U'}$,
\begin{eqnarray*}
  (K\circ \lambda_{f})( f(r),\alpha) &=& K(\lambda_{f}( f(r),\alpha)) \\
   &=& (\lambda_{f}( f(r),\alpha)(0), \Gamma_{U'}[\lambda_{f}( f(r),\alpha)]) \\
   &=&( f(r),\alpha)= id_{\bigtriangleup \Gamma_{U'}}( f(r),\alpha).
\end{eqnarray*}
Hence, $  K\circ \lambda_{f} = id_{\bigtriangleup \Gamma_{U'}}.$
Then, we get that $\lambda_{f}:
 \bigtriangleup \Gamma_{U'}\to X_{U'}^{I} $ is a
   map equivalence.
  \end{proof}
\begin{thm}\label{PPP5}
\emph{Let   $f: X_{U} \to  X_{U'}$ be an    E-regular fibration
embedding  of orbifold chart $(X_{U}, G_{U}, \Gamma_{U})$ into
orbifold chart $(X_{U'}, G_{U'}, \Gamma_{U'})$. Let
  $\lambda', \widehat{ f}:X_{U}^{I}\to X_{U'}^{I}$ be
 two
 maps  defined by
\[\lambda'(\beta)=\lambda_{f}[ f (\beta(0)),\Gamma_{U}\circ \beta] \mbox{ and } \widehat{ f}(\beta)= f \circ \beta  \] for all $ \beta\in
X_{U}^{I}$.
 Then, $\lambda'\simeq  \widehat{ f}$
preserves projection. }
\end{thm}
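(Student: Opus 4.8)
\textbf{Proof proposal for Theorem \ref{PPP5}.}

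The plan is to mimic the construction used in Theorem \ref{PPP1}, but now working over the source chart $X_U$ rather than the image $f(X_U)$, and to carry along the path $\beta$ itself (pushed forward by $f$) as the ``$t\le s$'' part of the homotopy. First I would fix $\beta\in X_U^I$ and set $\alpha=\Gamma_U\circ\beta\in X^I$; note that $\alpha(0)=\Gamma_U(\beta(0))=\Gamma_{U'}(f(\beta(0)))$, so that the pair $(f(\beta(0)),\alpha)$ lies in $\bigtriangleup\Gamma_{U'}$ and $\lambda'(\beta)=\lambda_f[f(\beta(0)),\alpha]$ is well defined. For $s\in I$ I would reuse the reparametrised path $\alpha^{1-s}\in X^I$ defined exactly as in Theorem \ref{PPP1} (equal to $t\mapsto\alpha(s+t)$ on $[0,1-s]$ and constant at $\alpha(1)$ afterwards), and I would define the homotopy $H:X_U^I\times I\to X_{U'}^I$ by
\[
H(\beta,s)(t)=\left\{
\begin{array}{c l}
 f(\beta(t)) & \mbox{for}\quad 0\le t\le s,\\
 \lambda_f\bigl[f(\beta(s)),\alpha^{1-s}\bigr](t-s) & \mbox{for}\quad s\le t\le 1.
\end{array}
\right.
\]
The first thing to check is that this is consistent at $t=s$: the top branch gives $f(\beta(s))$, while the bottom branch at $t-s=0$ gives $\lambda_f[f(\beta(s)),\alpha^{1-s}](0)=f(\beta(s))$ by property (1) of the E-lifting function, so $H$ is a genuine (continuous) map, and its codomain is $X_{U'}^I$ because $f(\beta(t))\in f(X_U)\subset X_{U'}$ and $\lambda_f$ already lands in $X_{U'}^I$.

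Next I would evaluate the two ends. At $s=0$ only the bottom branch survives, and $\alpha^{1}=\alpha$, so $H(\beta,0)=\lambda_f[f(\beta(0)),\alpha]=\lambda'(\beta)$. At $s=1$ only the top branch survives, giving $H(\beta,1)(t)=f(\beta(t))=(f\circ\beta)(t)=\widehat f(\beta)(t)$; here I would invoke the E-regularity of $f$ only if needed to rule out any mismatch of the bottom branch at $s=1$, but since that branch has empty domain the identification is immediate. (If one prefers a symmetric treatment, E-regularity of $f$ applied to the constant path $\alpha^{0}$ shows $\lambda_f[f(\beta(1)),\alpha^{0}]$ is the constant path at $f(\beta(1))$, which also matches.) Thus $H(\beta,0)=\lambda'(\beta)$ and $H(\beta,1)=\widehat f(\beta)$ for all $\beta$, so $\lambda'\simeq\widehat f$.

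Finally I would verify the ``preserving projection'' clause, i.e. $[\Gamma_{U'}\circ H(\beta,s)](t)=\Gamma_{U'}(f(\beta(t)))=\Gamma_U(\beta(t))=\alpha(t)$ for all $s,t$. On the range $0\le t\le s$ this is $\Gamma_{U'}(f(\beta(t)))=\Gamma_U(\beta(t))=\alpha(t)$ using $\Gamma_{U'}\circ f=\Gamma_U$. On the range $s\le t\le 1$ it is $\Gamma_{U'}\bigl[\lambda_f[f(\beta(s)),\alpha^{1-s}](t-s)\bigr]=\alpha^{1-s}(t-s)$ by property (2) of the E-lifting function, and since $0\le t-s\le 1-s$ we have $\alpha^{1-s}(t-s)=\alpha(s+(t-s))=\alpha(t)$; the two branches agree, so the projection $\Gamma_{U'}\circ H(\beta,s)$ equals $\alpha=\Gamma_U\circ\beta$ independently of $s$, which is exactly the preserving-projection condition. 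I expect the only delicate point to be the continuity/well-definedness of $H$ at the seam $t=s$ and the fact that $(f(\beta(s)),\alpha^{1-s})$ genuinely lies in $\bigtriangleup\Gamma_{U'}$ for every $s$ — both of which follow from $\Gamma_{U'}\circ f=\Gamma_U$ together with property (1) of $\lambda_f$ — so no essential obstacle arises beyond bookkeeping.
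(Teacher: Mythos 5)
Your proposal is correct and follows essentially the same route as the paper: the same homotopy $H(\beta,s)$ built from $f\circ\beta$ on $[0,s]$ (the paper writes $f[\beta_s(t)]$, which coincides with $f(\beta(t))$ there) and $\lambda_f[f(\beta(s)),\alpha^{1-s}]$ on $[s,1]$, with the same endpoint and projection-preservation checks via $\Gamma_{U'}\circ f=\Gamma_U$ and property (2) of the E-lifting function. Your added checks (seam consistency at $t=s$ and membership of $(f(\beta(s)),\alpha^{1-s})$ in $\bigtriangleup\Gamma_{U'}$) only make the argument slightly more careful than the paper's.
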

\begin{proof}
  For $\beta \in X_{U}^{I}$ and $s\in I$, we can define
a path $\beta_{s} \in X_{U}^{I}$ by
\[ \beta_{s}(t)= \left\{
  \begin{array}{c l}
    \beta(t) & \mbox{for} \quad 0\leq t\leq s,\\
    \beta(s)  & \mbox{for}\quad s\leq t \leq 1,
  \end{array}
\right. \] For $\alpha = \Gamma_{U}\circ \beta$ and $s\in I$, we
can define the path $\alpha^{1-s}\in X$ by
\[ \alpha^{1-s}(t)= \left\{
  \begin{array}{c l}
    \alpha(s+t) & \mbox{for} \quad 0\leq t\leq 1-s,\\
    \alpha(1)  & \mbox{for}\quad 1-s\leq t \leq 1,
  \end{array}
\right. \] Define a  map $H:X_{U}^{I} \times I\to
  X_{U'}^{I}$ by
\[  H(\beta,s) (t)= \left\{
  \begin{array}{c l}
     f[\beta_{s}(t)] & \mbox{for} \quad 0\leq t\leq s,\\
    \lambda_{f}[ f(\beta(s)),\alpha^{1-s}](t-s)  & \mbox{for}\quad s\leq t \leq 1,
  \end{array}
\right. \] for all $s\in I, \beta\in X_{U}^{I}$. By the
E-regularity of $f$, we get that
\begin{eqnarray*}
 H(\beta,0)(t) &=& \lambda_{f}[ f(\beta(0)),\alpha^{1}](t) =\lambda_{f}[ f(\beta(0)),\alpha](t)\\
   &=&\lambda_{f}[ f(\beta(0)),\Gamma_{U}\circ \beta](t)= \lambda'(\beta)(t),
\end{eqnarray*}
and $H(\beta,1) (t)=( f\circ \beta)(t)$  for all $\beta\in
X_{U}^{I}, t\in I$. Hence,
\[H(\beta,0)=\lambda'(\beta)\quad\mbox{and}\quad H(\beta ,
1)= f\circ \beta=\widehat{ f}(\beta),\] for all $\beta\in
X_{U}^{I}$, that is, $\lambda' \simeq \widehat{ f} $. Also
\begin{eqnarray*}
   [\Gamma_{U'}\circ H(\beta , s)](t) &=&\left\{
  \begin{array}{c l}
   \Gamma_{U'}\{  f[\beta_{s}(t)]\} & \mbox{for} \quad 0\leq t\leq s,\\
    \Gamma_{U'}\{\lambda_{f}[ f(\beta(s)),\alpha^{1-s}](t-s)\}  & \mbox{for}\quad s\leq t \leq
    1,
  \end{array}
\right. \\
&=& \left\{
  \begin{array}{c l}
   \Gamma_{U'}\{  f[\beta(t)]\} & \mbox{for} \quad 0\leq t\leq s,\\
    \alpha^{1-s}(t-s)  & \mbox{for}\quad 0\leq t-s \leq 1-s,
  \end{array}
\right. \\
&=& \left\{
  \begin{array}{c l}
    \Gamma_{U'}\{  f[\beta(t)]\} & \mbox{for} \quad 0\leq t\leq s,\\
    \alpha(s+t-s)  & \mbox{for}\quad 0\leq t-s \leq 1-s,
  \end{array}
\right. \\
&=& \left\{
  \begin{array}{c l}
    \Gamma_{U'}\{  f[\beta(t)]\} & \mbox{for} \quad 0\leq t\leq s,\\
    \alpha(t)  & \mbox{for}\quad s\leq t \leq 1,
  \end{array}
\right. \\
&=& \left\{
  \begin{array}{c l}
    \Gamma_{U'}\{  f[\beta(t)]\} & \mbox{for} \quad 0\leq t\leq s,\\
    (\Gamma_{U}\circ \beta)(t)\}  & \mbox{for}\quad s\leq t \leq 1,
  \end{array}
\right. \\
&=& \left\{
  \begin{array}{c l}
    \Gamma_{U'}\{  f[\beta(t)]\} & \mbox{for} \quad 0\leq t\leq s,\\
    \Gamma_{U'}\{  f[\beta(t)]\}  & \mbox{for}\quad s\leq t \leq 1,
  \end{array}
\right. \\
&=&\Gamma_{U'}\textbf{}[\widehat{ f}(\beta)(t)]
\end{eqnarray*}
or all $\beta\in X_{U}^{I}, s, t\in I$. Therefore,  $\lambda'\simeq
\widehat{ f}$ preserving projection.
\end{proof}

\section{Conclusion}
In this paper, we introduced E-fibration embedding map in an orbifold chart. Later we established some fundamental properties of E-fibration such as the
restriction, product, etc.  We also studied  the relationship of E-fibration with Hurewicz fibration. Furthermore, we introduced the notion of E-lifting functions and  studied  preserving projection property of these lifting functions. Ideas of homotopy played a crucial role in this paper. Thus, we hope that this paper will open some new dimensions in research related to orbifold and Hurewicz fibration.\\

\section*{Acknowledgement}
The authors would like to thank the Deanship of Scientific Research at Umm Al-Qura University for supporting this work by Grant Code: 22UQU4330052DSR07


\bibliographystyle{plain}

\end{document}